\numberwithin{equation}{section}
\theoremstyle{plain}
\newtheorem{theorem}[equation]{Theorem}
\newtheorem{proposition}[equation]{Proposition}
\newtheorem{lemma}[equation]{Lemma}
\newtheorem{corollary}[equation]{Corollary}
\theoremstyle{definition}
\newtheorem{definition}[equation]{Definition}
\theoremstyle{remark}
\newtheorem{Example}[equation]{Example}
\newtheorem{remark}[equation]{Remark}
\DeclareRobustCommand{\qee}{%
  \ifmmode \mathqee
  \else
    \leavevmode\unskip\penalty9999 \hbox{}\nobreak\hfill
    \quad\hbox{\qeesymbol}%
  \fi
}
\newcommand{\mathqee}{\quad\hbox{\qeesymbol}}
\newcommand{\qeesymbol}{\ensuremath\diamondsuit}
\newcommand{\Lie}[1]{\operatorname{\textsl{#1}}}
\newcommand{\lie}[1]{\operatorname{\mathfrak{#1}}}
\newcommand{\GL}{\Lie{GL}}
\newcommand{\sln}{\lie{sl}}
\newcommand{\su}{\lie{su}}
\newcommand{\bmf}{\lie b}
\newcommand{\cf}{\lie c}
\newcommand{\kf}{\lie k}
\newcommand{\n}{\lie n}
\newcommand{\p}{\lie p}
\newcommand{\q}{\lie q}
\newcommand{\tf}{\lie t}
\newcommand{\z}{\lie z}
\newcommand{\SL}{\Lie{SL}}
\newcommand{\SU}{\Lie{SU}}
\newcommand{\Un}{\Lie{U}}
\newcommand\C{{\mathbb C}}
\newcommand{\HH}{{\mathbb H}}
\newcommand{\R}{{\mathbb R}}
\newcommand{\impl}{{\textup{impl}}}
\newcommand{\hks}{{\textup{hks}}}
\newcommand{\symp}{{\sslash}} 
\newcommand{\hkq}{{\sslash\mkern-6mu/}}
\DeclareMathOperator{\Hom}{Hom}
\DeclareMathOperator{\im}{im}
\DeclareMathOperator{\Spec}{Spec}
\DeclareMathOperator{\Stab}{Stab}
\DeclareMathOperator{\tr}{tr}
\DeclarePairedDelimiter{\abs}{\lvert}{\rvert}
\newcommand{\eqbreak}[1][2]{\\&\hskip#1em}
\begin{document}

\title{Implosions and hypertoric geometry}

\author{Andrew Dancer}
\address[Dancer]{Jesus College\\
Oxford\\
OX1 3DW\\
United Kingdom} \email{dancer@maths.ox.ac.uk}

\author{Frances Kirwan}
\address[Kirwan]{Balliol College\\
Oxford\\
OX1 3BJ\\
United Kingdom} \email{kirwan@maths.ox.ac.uk}

\author{Andrew Swann}
\address[Swann]{Department of Mathematics\\
Aarhus University\\
Ny Munkegade 118, Bldg 1530\\
DK-8000 Aarhus C\\
Denmark\\
\textit{and}\\
CP\textsuperscript3-Origins,
Centre of Excellence for Cosmology and Particle Physics Phenomenology\\
University of Southern Denmark\\
Campusvej 55\\
DK-5230 Odense M\\
Denmark} \email{swann@imf.au.dk}

\subjclass[2000]{53C26, 53D20, 14L24}

\maketitle

\setcounter{section}{-1}

\begin{center}
\itshape Dedicated to Professor C. S. Seshadri on the occasion \\ of his 80th birthday
\end{center}
\section{Introduction}
\label{sec:introduction}
Hyperk\"ahler manifolds occupy a special position at the intersection
of Riemannian, symplectic and algebraic geometry. A hyperk\"ahler
structure involves a Riemannian metric, as well as a triple of
complex structures satisfying the quaternionic relations. Moreover we require
that the metric is K\"ahler with respect to each complex structure, so we 
have a triple (in fact a whole two-sphere) of symplectic forms. Of course,
there is no Darboux theorem in hyperk\"ahler geometry because the
metric contains local information. However, many of the
constructions and results of symplectic geometry, especially
those related to moment maps, do have analogues
in the hyperk\"ahler world. The prototype is the hyperk\"ahler quotient
construction \cite{HKLR}, and more recent examples include
hypertoric varieties \cite{BD} and cutting \cite{DSmod}.

In this article we shall explore a hyperk\"ahler analogue of
Guillemin, Jeffrey and Sjamaar's construction of \emph{symplectic
  implosion} \cite{GJS}.  This may be viewed as an
abelianisation procedure: given a symplectic manifold \( M \) with a
Hamiltonian action of a compact group \( K \), the implosion \(
M_\impl \) is a new symplectic space with an action of the maximal
torus \( T \) of \( K \), such that the symplectic reductions of \(
M_\impl \) by \( T \)  agree with the reductions of \( M \) by \( K
\). However the implosion is usually not smooth but is a singular space
with a stratified symplectic structure.
 The implosion of the cotangent bundle \( T^*K \) acts as a universal
object here; implosions of general Hamiltonian \( K \)-manifolds may be defined
using the symplectic implosion \(
(T^*K)_\impl \). This space also has an algebro-geometric description as 
the geometric invariant theory quotient of
 \( K_\C \) by a maximal unipotent subgroup~\( N \).

In \cite{DKS} we introduced a hyperk\"ahler analogue of the universal
implosion in the case of \( \SU(n) \) actions.
The construction proceeds via quiver diagrams, and
produces a stratified hyperk\"ahler space \( Q \).
 The hyperk\"ahler strata can be described in terms of open sets
in complex symplectic quotients of the cotangent bundle 
of \( K_\C=\SL(n, \C) \) by subgroups containing
commutators of parabolic subgroups.  There is a maximal torus action,
and hyperk\"ahler quotients by this action 
 yield not single complex coadjoint orbits but
rather their canonical affine completions which are Kostant varieties.

In this article, we shall develop some of the ideas of \cite{DKS},
focusing on some aspects, such as toric geometry and gauge theory
constructions, which may generalise to the case of an arbitrary
compact group \( K \). In particular, we shall show the existence
in the case \( K=SU(n) \) of a hypertoric variety inside the implosion,
which has a natural description in terms of quivers. This 
is a hyperk\"ahler analogue of the result of \cite{GJS} that the
universal symplectic implosion \(
(T^*K)_\impl \) naturally contains the toric variety
associated to a positive Weyl chamber for \( K \).
 
The layout of the paper is as follows. In \S1 we review the theory of symplectic
implosion described in \cite{GJS}, and in \S2 we recall how hyperk\"ahler 
implosion for \( K=\SU(n) \) is introduced in \cite{DKS}.  In \S3 we recall some of the theory 
of hypertoric varieties and describe a hypertoric variety which maps naturally to the universal hyperk\"ahler implosion \( Q \) for \( K=\SU(n) \). In \S4 we recall the stratification given in \cite{DKS} of \( Q \) into strata which are hyperk\"ahler manifolds, and in \S5 we refine this stratification to obtain strata \( Q_{[\sim,\mathcal{O}]} \) 
which are not hyperk\"ahler but which reflect the group structure of \( K=\SU(n) \) and
can be indexed in terms of Levi subgroups and nilpotent orbits in the complexification \( K_\C \) of \( K \). In \S6, \S7 and \S8 we use Jordan canonical form to describe open subsets of the refined strata by putting their quivers into standard forms.  
Finally in \S9 we explore briefly the relationship between the finite-dimensional picture of the universal hyperk\"ahler implosion \( Q \) for \( K=\SU(n) \) and an infinite-dimensional point of view involving the Nahm equations.

\subsubsection*{Acknowledgements.} The work of the second author was
supported by a Senior Research Fellowship of the Engineering and
Physical Sciences Research Council (grant number GR/T016170/1) during
much of this project. The third author is partially supported by the
Danish Council for Independent Research, Natural Sciences.

\section{Symplectic implosion}
\label{sec:symplectic-implosion}

Our study of hyperk\"ahler implosion in \cite{DKS} was motivated by
the theory of symplectic implosion, due to Guillemin,
Jeffrey and Sjamaar \cite{GJS}.  For this we start with a symplectic
manifold \( M \) with a Hamiltonian symplectic action of a compact Lie
group \( K \) with maximal torus \( T \).
If \( \lambda \) is a central element of \( \kf^* \) the
symplectic reduction \( M \symp_\lambda^s K \) is 
\( \mu^{-1}(\lambda)/K \) where \( \mu : M \rightarrow \kf^* \) is the moment
map for the action of \( K \) on \( M \). For a general element \( \lambda \in \kf^* \),
we define the symplectic reduction \( M \symp_\lambda^s K \)
to be the space \( (M
\times \mathsf O_{-\lambda}) \symp_0^s K \), where \( \mathsf
O_\lambda \) is the coadjoint orbit of \( K \) through \( \lambda \) with the
standard Kirillov-Kostant-Souriau symplectic structure.
 This reduction may be
identified with \( \mu^{-1}(\lambda)/\Stab_K(\lambda) \) where
 \( \Stab_K(\lambda) \) is the stabiliser of \( \lambda \) under the 
coadjoint action of \( K \).

 The imploded space \( M_\impl \) is a stratified symplectic space with a 
Hamiltonian action of the maximal torus \( T \) of \( K \), such that
\begin{equation}
  M \symp_\lambda^s K   = M_\impl \symp^s_\lambda T
\end{equation}
for all \( \lambda \) in the closure \( \tf_{+}^* \) of a fixed positive Weyl
chamber in \( \tf^* \).

The key example is the implosion of the
  cotangent bundle \( T^*K \). Now \( T^*K \) carries a \( K \times K \) action,
which we can think of as commuting left and right actions of \( K \). The
left action is \( (k, \xi) \mapsto (hk,\xi) \) while the right action is
\( (k, \xi) \mapsto (kh^{-1}, Ad(h).\xi) \). 
The moment maps for the left and right actions are 
\begin{equation*}
(k, \xi) \mapsto -Ad(k).\xi
\end{equation*}
and
\begin{equation*}
(k, \xi) \mapsto \xi
\end{equation*}
respectively. 
We shall implode \( T^*K \) with respect to the right action.
Explicitly, \( (T^*K)_\impl \) is obtained from \( K \times \tf_{+}^*
\), by identifying \( (k_1, \xi) \) with \( (k_2, \xi) \) if \( k_1,
k_2 \) are related by the action of an element of the commutator
subgroup of \( \Stab_{K}(\xi)\). Thus if \( \xi \) is in the interior of
the chamber, its stabiliser is a torus and no collapsing occurs, and  
 an open dense subset of \(
(T^*K)_\impl \) is just the product of \( K \) with the interior of
the Weyl chamber.
Now symplectic reduction 
by the right action of \( T \) at level \( \lambda \)
(in the closed positive Weyl chamber)
will fix \( \xi \) to be \( \lambda \), and collapse by the product of \( T \)
with the commutator subgroup of \( \Stab_{K}(\lambda) \), which is equivalent to
collapsing by \( \Stab_{K}(\lambda) \). Now we have
\begin{equation*}
  (T^*K)_\impl\symp_\lambda^s T = K/\Stab_{K}(\lambda)=\mathsf O_\lambda =
(T^*K) \symp_{\lambda}^s K
\end{equation*}
as required.
 \(
(T^*K)_\impl \) inherits  a Hamiltonian \( K \times T \)-action from
the  Hamiltonian \( K \times K \)-action on \( T^*K \).  This gives us a
universal implosion, in the sense that the implosion \( M_\impl \) of
a general symplectic manifold \( M \) with a Hamiltonian \( K
\)-action can be obtained as the symplectic reduction \( (M \times
(T^*K)_\impl) \symp_0^s K \).

It is also shown in \cite{GJS} that the implosion \( (T^*K)_{\impl} \)
may be embedded in the complex affine space
 \( E = \oplus V_{\varpi} \), where \( V_{\varpi} \) is the 
\( K \)-module with highest weight \( \varpi \).  
and we take the sum over a minimal generating set
for the monoid of dominant weights. 
  We denote a highest weight vector of \( V_{\varpi} \) by \(
v_{\varpi} \). In this picture,
 the symplectic implosion may be realised as the
closure \( \overline{K_\C v} \), where \( v = \sum v_\varpi \) is the
sum of the highest weight vectors, and \( K_{\C} \) denotes the complexification
of \( K \).

In terms of the Iwasawa decomposition \(
K_\C=KAN \) we have that the maximal unipotent subgroup
\( N \) is the stabiliser of \( v \),
so an open dense set in the implosion is \( K_\C v = K_{\C}/N \). Taking the
closure gives lower-dimensional strata in the implosion, which
may be identified with quotients \( K_{\C}/[P,P] \) where \( P \) ranges over
parabolic subgroups of \( K_{\C} \). Of course, taking \( P \) to be the Borel \( B \)
gives the top stratum \( K_{\C}/N = K_{\C} /[B,B] \).
In fact the full implosion may be identified with the Geometric Invariant 
Theory (GIT) quotient of \( K_{\C} \) by the nonreductive group \( N \):
\begin{equation*}
  K_\C \symp N = \Spec(\mathcal{O}(K_\C)^N),
\end{equation*}
This may also be viewed as the canonical affine completion of the quasi-affine
variety \( K_\C / N \). (We refer to \cite{DK} for background on 
nonreductive GIT quotients).

Using the Iwasawa decomposition as above, and recalling that \( T_{\C}=TA \),
we see that \(
\overline{K_\C v} = \overline{KAv} = K (\overline{T_\C v}) \), 
the sweep under the
compact group \( K \) of a toric variety \( X= \overline{T_\C v} \).
As \( T_{\C} \) normalises \( N \), we have that \( N \) stabilises every point in \( X \);
in fact \( X \) is the fixed point set \( E^N \) for the action of \( N \) on the
vector space \( E \). The action of the compact torus \( T \) defines
a moment map \( \mu_{T} : X \rightarrow \tf^* \) whose image is (minus) \( \tf_{+}^* \),
so \( -\tf_{+}^* \) is the Delzant polytope for the toric variety \( X \).
Equation (6.6) in \cite{GJS} defines a \( T \)-equivariant 
map \( s : \tf_{+}^* \rightarrow X \) which is a section for \( -\mu_{T} \).
The map \( s \) extends to a \( K \times T \)-equivariant map \( K \times \tf_{+}^*
\rightarrow KX \), which induces a homeomorphism from \( (T^*K)_{\impl} \)
onto \( \overline{K_\C v} \)

Recall that the moment map for the left \( K \) action on \( T^*K \) is
\begin{equation*}
\mu_{K} : (k, \xi) \mapsto -Ad(k).\xi
\end{equation*}
Note that two points \( (k_1, \xi), (k_2, \xi) \) in \( T^*K \) 
with the same \( \kf^* \) coordinate have the same
image under \( \mu_{K} \) if and only if \( k_1 k_2^{-1} \in {\rm Stab}_{K}(\xi) \).
In particular two points of \( K \times \tf^* \) which are identified in the 
implosion will have the same image under \( \mu_K \), so this map descends
to the implosion.

We have a commutative diagram
\smallskip
\begin{equation*}
\begin{array}{ccccc}
(T^*K)_{\impl}  &  \stackrel{\mu_{K}}{\longrightarrow} & \kf^* & \rightarrow &  
\kf^*/K \\
{
\downarrow} &       &                &            & ||  \\
X              &   \stackrel{-\mu_{T}}{\longrightarrow}   & \tf^* &   \rightarrow  &  \tf^* / W
\end{array}
\end{equation*}
where the left vertical arrow is induced by \( (k,\xi) \mapsto s(\xi) \), and the
rightmost arrow in each row is the obvious quotient map.

\medskip
In \cite{DKS} we introduced a new model for the symplectic implosion for \( K=SU(n) \), in 
terms of \emph{symplectic quivers}. These are
diagrams
\begin{equation}
  \label{eq:symplectic}
  0 = V_0 \stackrel{\alpha_0}{\rightarrow}
  V_1 \stackrel{\alpha_1}{\rightarrow}
  V_2 \stackrel{\alpha_2}{\rightarrow} \dots
  \stackrel{\alpha_{r-2}}{\rightarrow} V_{r-1}
  \stackrel{\alpha_{r-1}}{\rightarrow} V_r = \C^n.
\end{equation}
where \( V_i \) is a vector space of dimension \( n_i \). The group
\( \prod_{i=1}^{r-1} \SL(V_i) \) acts on quivers by
\begin{align*}
  \alpha_i &\mapsto g_{i+1} \alpha_i g_i^{-1} \quad (i = 1,\dots, r-2),\\
  \alpha_{r-1} &\mapsto \alpha_{r-1} g_{r-1}^{-1}.
\end{align*}
There is also of course a commuting action of \( \GL(n,\C) = \GL(V_r)
\) by left multiplication of \( \alpha_{r-1} \).
We considered the GIT quotient of the space of quivers by
\( \prod_{i=1}^{r-1} \SL(V_i) \), focusing particularly on the full flag case when 
\( n_i =i \) for all \( i \). It turns out that such a quiver lies in a closed orbit
if and only if, for each \( i \) we have
\begin{compactenum}
  \item \( \alpha_i \) is injective, or
  \item \( V_i = \im\alpha_{i-1} \oplus \ker\alpha_i \).
  \end{compactenum}
We may now decompose \( \C^i = \ker \alpha_i \oplus \C^{m_i} \),
where \( \C^{m_i} = \C^i \) if \( \alpha_i \) is injective and we take
\( \C^{m_i} = \im \alpha_{i-1} \) otherwise. This defines a
decomposition of the quiver into two subquivers; for one subquiver
the maps are all injective while for the other they are all zero.
We may therefore focus on the injective quiver. 
As explained in \S 4 of \cite{DKS}, we may contract any
edges of this quiver where the maps are isomorphisms. More precisely,
if \( m_i = m_{i-1} \) then we have \( m_i \leq i-1 < i \), so we actually
have a \( GL(m_i) \) action on \( \C^{m_i} \) and the isomorphism
\( \C^{m_{i-1}} \rightarrow \C^{m_i} \) may be set to be the identity, so
this edge of the quiver may be removed.
After this process the dimensions of the spaces in the
injective quiver are given by a strictly increasing
sequence of integers ending with \( n \).

The upshot is that we have a stratification of the GIT
quotient by \( \prod_{i=2}^{n-1} \SL(i) \) of the space of full flag
quivers. There are \( 2^{n-1} \) strata, indexed by the strictly
increasing sequences of positive integers ending with \( n \), or
equivalently by the ordered partitions of \( n \). Moreover, 
the injectivity property makes it easy to analyse the structure
of each stratum. For we may now use the  action of \( \prod_{i=2}^{n-1} \SL(i) \)
and \( SL(n) \) to put the \( \alpha_i \) into a standard form where
all entries are zero except for the \( (j,j) \) entries (\( j=1,\ldots, m_i) \),
which equal \( 1 \). The freedom involved in putting the \( \alpha_i \) into
this standard form is exactly an element of the commutator \( [P,P] \),
where \( P \) is the parabolic subgroup of \( \SL(n) \)
corresponding to the ordered partition of \( n \).
We conclude that the strata can be identified with
\( \SL(n)/[P,P] \).
In fact, the full GIT quotient
may be identified with the symplectic implosion for \( SU(n) \) and the strata
are just the strata of the implosion discussed above.

We may also realise the toric structure discussed above in this model.
For we can instead put \( \alpha_i \) into a slightly different standard form where
the \( (j,j) \) entries (\( j=1,\ldots, m_i) \) can 
now equal a nonzero scalar \( \sigma_i \), not necessarily \( 1 \). 
This standard form is now preserved by an element of the parabolic \( P \), and
the \( \sigma_i \) define an algebraic torus of dimension given by the length
of the injective quiver (equivalently, we are considering the 
fibration \( T^{r-1}_{\C} \rightarrow SL(n)/[P,P] \rightarrow SL(n)/P \) for
each stratum). These tori fit together to form the toric variety.
Now the generalised flag variety \( SL(n)/P \) is a homogeneous
space for the compact group \( SU(n) \), so we see again that the sweep
of the toric variety under the \( SU(n) \) action is the full implosion.
Alternatively, we may allow the entries \( \sigma_i \) to depend on
\( j \) as well as \( i \). Now we have an action on such
configurations of the product of the
maximal tori of \( SL(m_i) \), and this action may be used to bring the
quiver into the form above where \( \sigma_i \) depends only on \( i \). 
\section{Hyperk\"ahler implosion}
\label{sec:towards-hyperk-impl}

In \cite{DKS} a hyperk\"ahler analogue of the symplectic implosion was introduced
for the group \( K= SU(n) \). Motivated by the quiver model for symplectic implosion
described in the preceding section, we look at 
quiver diagrams of the following form:
\begin{equation*}
  0 = V_0\stackrel[\beta_0]{\alpha_0}{\rightleftarrows}
  V_1\stackrel[\beta_1]{\alpha_1}{\rightleftarrows}
  V_2\stackrel[\beta_2]{\alpha_2}{\rightleftarrows}\dots
  \stackrel[\beta_{r-2}]{\alpha_{r-2}}{\rightleftarrows}V_{r-1}
  \stackrel[\beta_{r-1}]{\alpha_{r-1}}{\rightleftarrows} V_r
  = \C^n 
\end{equation*}
where \( V_i \) is a complex vector space of complex dimension \( n_i
\) and \( \alpha_0 = \beta_0 =0 \). The space \( M \) of quivers for fixed
dimension vector \( (n_1, \dots, n_r) \) is a flat hyperk\"ahler vector space.

There is a hyperk\"ahler action of \( \Un(n_1) \times \dots \times
\Un(n_r) \) on this space given by
\begin{equation*}
  \alpha_i \mapsto g_{i+1} \alpha_i g_i^{-1},\quad
  \beta_i \mapsto g_i \beta_i g_{i+1}^{-1} \qquad (i=1,\dots r-1),
\end{equation*}
with \( g_i \in \Un(n_i) \) for \( i=1, \dots, r \). 

Let \( \tilde{H} \) be the subgroup, isomorphic to \( \Un(n_1) \times
\dots \times \Un(n_{r-1}) \), given by setting \( g_r=1 \), and let
\(H = SU(n_1) \times \dots \times SU(n_{r-1}) \leqslant \tilde{H}\).

\begin{definition}
  \label{defQ}
  The \emph{universal hyperk\"ahler implosion for \( \SU(n) \)} will be
  the hyperk\"ahler quotient \( Q = M \hkq H \), where \( M, H \) are
  as above with \( r=n \) and \( n_j =j \), \( (j=1, \dots, n) \), (that
  is, the case of full flag quivers).
\end{definition}
The hyperk\"ahler moment map equations for the \( H \)-action
 are (in the full flag case) 
\begin{align}
  \label{eq:mmcomplex}
  \alpha_i \beta_i - \beta_{i+1}
  \alpha_{i+1} &= \lambda^\C_{i+1} I \qquad (0 \leqslant i \leqslant
  n-2)
\end{align}
where \( \lambda^\C_i \in \C \) for \( 1 \leqslant i \leqslant n-1 \),
and
\begin{equation} \label{eq:mmreal}
  \alpha_i \alpha_i^* - \beta_i^* \beta_i +
  \beta_{i+1} \beta_{i+1}^* - \alpha_{i+1}^* \alpha_{i+1} =
  \lambda^\R_{i+1} I \quad (0 \leqslant i \leqslant n-2), 
\end{equation}
where \( \lambda^\R_i \in \R\) for \( 1 \leqslant i \leqslant n-1 \). 
 Now \( Q \) has a residual action of
\( (S^1)^{n-1} =\tilde{H}/H \) 
as well as an action of \( \SU(n_r) = \SU(n) \). In \S4 we will identify \( (S^1)^{n-1} \) with \( T \), the maximal torus of \( SU(n) \).
There
is also an \( Sp(1)=\SU(2) \) action which is not hyperk\"ahler but rotates the complex structures.
Using the standard theory relating symplectic and GIT quotients,
we have a description of \( Q=M \hkq H \), as the
quotient (in the GIT sense) of the subvariety defined by the complex moment map
equations (\ref{eq:mmcomplex})
by the action of
\begin{gather}
  H_\C = \prod_{i=1}^{n-1}\SL(n_i, \C) \notag \\ 
  \label{SLaction1}
  \alpha_i \mapsto g_{i+1} \alpha_i g_i^{-1}, \quad \beta_i \mapsto
  g_i \beta_i g_{i+1}^{-1} \qquad (i=1,\dots n-2),\\
  \label{SLaction2}
  \alpha_{n-1} \mapsto \alpha_{n-1} g_{n-1}^{-1}, \quad \beta_{n-1}
  \mapsto g_{n-1} \beta_{n-1},
\end{gather}
where \( g_i \in \SL(n_i, \C) \). 

We introduce the element \( X = \alpha_{n-1} \beta_{n-1}
\in \Hom (\C^n, \C^n) \), which is invariant under the action of \(
\prod_{i=1}^{n-1}\GL(n_i,\C) \) and transforms by conjugation under
the residual \( \SL(n,\C)=\SL(n_r,\C) \) action on \( Q \).  We thus have  a \(
T_{\C} \)-invariant and \( \SL(n,\C) \)-equivariant map \( Q
\rightarrow \sln(n,\C) \) given by:
\begin{equation*}
  (\alpha, \beta) \mapsto X - \frac1n \tr(X) I_n
\end{equation*}
where \( I_n \) is the \( n\times n \)-identity matrix. In fact this is
the complex moment map for the residual \( SU(n) \) action on \( Q \).
It is shown in \cite{DKS} that \( X \) satisfies an equation \( X(X+ \nu_1) \dots (X+ \nu_{n-1})=0 \)
where \( \nu_i = \sum_{j=i}^{n-1} \lambda_j \). This generalises the equation \( X^n =0 \) in
 the quiver construction of the nilpotent
variety in \cite{KS}. 

In general it is useful to compare our construction with that in
\cite{KS}.  There one performs a hyperk\"ahler quotient by \( \tilde{H} \),
rather than \( H \), so all \( \lambda_i \) are zero.  In our
situation the \( \lambda_i \) are not constrained to be zero, and in fact give the value of the
complex moment map for the residual \( T \) action on \( Q \).

In \cite{DKS} we first analysed the points of the implosion that give
closed orbits for the \( T_{\C} \) action, or equivalently, quivers that satisfy the
equations (\ref{eq:mmcomplex}) and give closed orbits for the action of \( \tilde{H}_{\C} \) as well as \( H_\C \).
Such quivers can be split into a sum of a quiver with \( \alpha_i \) injective and \( \beta_i \) 
surjective, and a collection of quivers where the non-zero maps are isomorphisms.

In general one must consider quivers that satisfy 
(\ref{eq:mmcomplex}) and  
give a closed orbit for the \( H_{\C} \) action but not necessarily for
the \( \tilde{H}_{\C} \) action. However for each such quiver
we may rotate complex structures so that the closed orbit condition is
actually satisfied for a larger subgroup of \( \tilde{H}_{\C} \). In this
way we obtain a stratification for the implosion.

Using the methods that appeared in the analysis of the symplectic
implosion, we described in \cite{DKS} \S7 the strata for the hyperk\"ahler implosion in
terms of complex-symplectic quotients of \( T^*SL(n,\C) \) by extensions
of abelian groups by commutators of parabolics.  In more detail, we
can follow the argument in the symplectic case to standardise the
surjective maps \( \beta_i \) as \( (0 | I) \). 
The equations (\ref{eq:mmcomplex}) now enable us to  
find \( \alpha_i \) in terms of \( \alpha_{i+1} \) and \( \lambda_{i+1}^{\C} \).
Now knowledge of (the
tracefree part of) \( X =\alpha_{n-1} \beta_{n-1} \), together with the
equations (\ref{eq:mmcomplex}), enables us to
work down the quiver inductively
determining all the \( \alpha_i \). Further details of some of these
arguments are given in \S 5 and \S 6, as well as in \cite{DKS}.

The universal hyperk\"ahler implosion \( Q \) contains an open set which may be identified with
\( SL(n,\C) \times_{N} \bmf \), the complex-symplectic quotient of \( T^*
SL(n,\C) \) by the maximal unipotent \( N \). This arises as the locus of
full flag quivers with all \( \beta_i \) surjective.
The full implosion \( Q \) may in fact be identified with the
non-reductive GIT quotient (\( SL(n, \C) \times \bmf) \symp N \). The
hyperk\"ahler torus quotients of \( Q \) can be identified for any fixed
complex structure with the complex-symplectic reductions of \( Q \) by the
complexified torus \( T_{\C} \) in the sense of GIT. That is, we take the
GIT quotients with respect to \( T_{\C} \) of the level sets of the
complex moment map for the action of \( T \) on \( Q \). These
complex-symplectic reductions give us the Kostant varieties which are
the subsets of \( \sln(n, \C) \) obtained by fixing the eigenvalues. In
particular torus reduction at level \( 0 \) gives the nilpotent
variety. If, by contrast, we take the geometric (rather than GIT)
complex-symplectic reduction at level \( 0 \) of \( SL(n, \C)
\times_{N}\bmf \), we obtain \( (SL(n,\C) \times_{N} \n)/T_{\C} \), which is
the
Springer resolution \( SL(n, \C) \times_{B} \n \) of the nilpotent
variety.

As in the symplectic case, the hyperk\"ahler implosion is usually a
singular stratified space. In fact the symplectic implosion may be
realised as the fixed point set of a circle action on the
hyperk\"ahler implosion, so if the latter is smooth then so is the
former, which implies by results of \cite{GJS} that \( K \) is, up to
covers, a product of copies of \( SU(2) \). If \( K=SU(2) \) the implosion is
just flat \( \HH^2 \).

\section{Hypertoric varieties} 
Classical toric varieties arise
as symplectic quotients of \( \C^d \) by a subtorus of \( (S^1)^d \), and have
a symplectic action of a compact torus \( T \) whose real dimension is half that of the
toric variety \cite{G}, \cite{De}. The image of the toric variety 
under the associated moment map
is called the Delzant polytope, and the toric variety is determined up to \( T \)-equivariant isomorphism by \( T \) and this polytope in \( \lie{t}^* \).

We recall that a \emph{hypertoric} (or \emph{toric hyperk\"ahler})
variety is, by analogy, obtained as a hyperk\"ahler quotient
of flat quaternionic space \( \HH^d \) by a subtorus \( \bf N \) of \( T^d \). If the 
subtorus is of codimension \( n \) in \( T^d \), the associated hypertoric
variety \( \HH^d \hkq {\bf N} \) has real dimension \( 4n \) and has a
hyperk\"ahler action of \( T^n \cong T^d/{\bf N} \). 
The hyperk\"ahler moment map for this action is a surjection onto \( \R^{3n} \)
and much of the geometry of the hypertoric  is encoded in a collection
of codimension 3 affine subspaces (the \emph{flats}) in \( \R^{3n} \).
These play in some respects a role analogous to that of the hyperplanes
giving the faces of the Delzant polytope for classical toric varieties.
In particular the fibre of the moment map over a point
in \( \R^{3n} \) is a torus determined by the collection of flats
passing through that point.
We refer the reader to \cite{BD}, \cite{HS} for further background
on hypertorics.

We want to relate the hyperk\"ahler implosion \( Q \) to
the hypertoric variety associated to the arrangement of flats induced by the
hyperplane arrangement given
by the root planes in the Lie algebra \( \lie{t} \) of the maximal torus \( T \) of \( K=\SU(n) \).

\begin{definition} \label{defhypertoric}
Let \( M_T \) be the subset of \( M \) consisting of all hyperk\"ahler
quivers of the form 
\begin{equation*} 
\alpha_k = \left( \begin{array}{ccccc}
\nu_1^k & 0 & 0 & \cdots & 0\\
0 & \nu_2^k & 0 & \cdots & 0\\
 & & \cdots & & \\
0 & \cdots & 0 & 0 & \nu_k^k\\
0 & \cdots & 0 & 0 & 0 \end{array} \right) \end{equation*}
and
\begin{equation*}
\beta_k = \left( \begin{array}{cccccc}
\mu_1^k & 0 & 0 & 0 & \cdots & 0\\
0 & \mu_2^k & 0 & 0 & \cdots & 0\\
 & & \cdots & & &  \\
0 & \cdots & 0 & 0 & \mu_k^k & 0  \end{array} \right)
\end{equation*}
for some \( \nu^k_i, \mu_i^k \in \C \).
Recall from \cite{DKS} that we use \( M^{\hks} \) to denote the set of
\emph{hyperk\"ahler stable} quivers, that is, those that
after an appropriate rotation of complex structures
have all \( \alpha_i \) injective and all \( \beta_i \) surjective.
Let \( M_T^{\hks} = M^{\hks} \cap M_T \) be the subset of \( M^{\hks} \) consisting of all hyperk\"ahler
quivers of the form above; thus \( M_T^{\hks} \) consists of all quivers of the form
above such that \( \mu_i^k \) and \( \nu_i^k \) are not simultaneously zero
for any pair \( (i,k) \) with \( 1 \leqslant i \leqslant k < n \).
 
 Note that each of the
compositions \( \alpha_k \beta_k \), \( \beta_k\alpha_k \), \( \alpha_k \alpha_{k}^* \),
\( \alpha_{k}^*\alpha_k \), \( \beta_k\beta_{k}^* \) and \( \beta_{k}^*\beta_k \) is a 
diagonal matrix, so that for quivers of this form the hyperk\"ahler moment map
equations for the action of \( H = \prod_{k=1}^{n-1} \SU(k) \) reduce to 
the hyperk\"ahler moment map
equations for the action of its maximal torus
\begin{equation*}
T_H = \prod_{k=1}^{n-1} T_k
\end{equation*}
where \( T_k \) is the standard maximal torus in \( \SU(k) \). Moreover two
hyperk\"ahler stable quivers of this
form satisfying the hyperk\"ahler moment map
equations lie in the same orbit for the  action of \( H \) 
if and only if they lie in the same orbit for the action of its maximal torus
\( T_H \). Thus we get a natural map
\begin{equation*}
\iota:M_T \hkq T_H \to Q = M\hkq H
\end{equation*}
which restricts to an embedding
\begin{equation*}
\iota: Q^{\hks}_T \to Q
\end{equation*}
where \( Q^{\hks}_T = M_T^{\hks} \hkq T_H \).
\end{definition}

\begin{remark} \label{remhypertoric}
Note that \( M_T = \bigoplus_{k=1}^{n-1} \mathbb{H}^k = \mathbb{H}^{n(n-1)/2} \) is
a (flat) hypertoric variety with respect to the action of the standard
maximal torus \( T_{\tilde{H}} = (S^1)^{n(n-1)/2} \) of \( \tilde{H} = \prod_{k=1}^{n-1} \Un(k) \). The associated arrangement of flats in \( \R^{3n(n-1)/2} = \R^3 \otimes
\R^{n(n-1)/2} \) is just that induced by
the hyperplane arrangement given by the 
coordinate hyperplanes in \( \lie{t}_{\tilde{H}} = \R^{n(n-1)/2} \). Thus
\( M_T \hkq T_H \)
is a hypertoric variety for the induced action of 
\begin{equation*}
T_{\tilde{H}}/T_H = \prod_{k=1}^{n-1} \Un(k)/\SU(k) = (S^1)^{n-1}.
\end{equation*}
Moreover
we can identify \( T_{\tilde{H}}/T_H \) with the standard maximal torus \( T \) of 
\( K=\SU(n) \) in such a way that the induced action of \( T_{\tilde{H}}/T_H \)
on \(  Q^{\hks}_T \) coincides with the restriction to \( T \) of the action of \( K \) on
\(  Q^{\hks}_T \) embedded in \( Q=M \hkq H \) as above, since 
\begin{gather*}
  \left( \begin{array}{cccccc}
t_1^k & 0 & 0 & \cdots & 0 & 0\\
0 & t_2^k & 0 & \cdots & 0 & 0\\
 & & \cdots & & &\\
0 & \cdots & 0 & 0 & t_{k}^{k} & 0 \\
0 & \cdots & 0 & 0 & 0 & t_{k+1}^k \end{array} \right)
 \left( \begin{array}{ccccc}
\nu_1^k & 0 & 0 & \cdots & 0\\
0 & \nu_2^k & 0 & \cdots & 0\\
 & & \cdots & & \\
0 & \cdots & 0 & 0 & \nu_k^k\\
0 & \cdots & 0 & 0 & 0 \end{array} \right)\\
 =
 \left( \begin{array}{ccccc}
\nu_1^k & 0 & 0 & \cdots & 0\\
0 & \nu_2^k & 0 & \cdots & 0\\
 & & \cdots & & \\
0 & \cdots & 0 & 0 & \nu_k^k\\
0 & \cdots & 0 & 0 & 0 \end{array} \right)
 \left( \begin{array}{cccccc}
t_1^k & 0 & 0 & \cdots & 0 \\
0 & t_2^k & 0 & \cdots & 0 \\
 & & \cdots & & \\
0 & \cdots & 0 & 0 & t_{k}^k \end{array} \right)
\end{gather*}
and 
\begin{gather*}
  \left( \begin{array}{ccccc}
t_1^k & 0 & 0 & \cdots & 0\\
0 & t_2^k & 0 & \cdots & 0\\
 & & \cdots & & \\
0 & \cdots & 0 & 0 & t_k^k \end{array} \right)
\left( \begin{array}{cccccc}
\mu_1^k & 0 & 0 & 0 & \cdots & 0\\
0 & \mu_2^k & 0 & 0 & \cdots & 0\\
 & & \cdots & & &  \\
0 & \cdots & 0 & 0 & \mu_k^k & 0  \end{array} \right)
\\ =
\left( \begin{array}{cccccc}
\mu_1^k & 0 & 0 & 0 & \cdots & 0\\
0 & \mu_2^k & 0 & 0 & \cdots & 0\\
 & & \cdots & & &  \\
0 & \cdots & 0 & 0 & \mu_k^k & 0  
\end{array} \right)
  \left( \begin{array}{cccccc}
t_1^k & 0 & 0 & \cdots & 0 & 0\\
0 & t_2^k & 0 & \cdots & 0 & 0\\
 & & \cdots & & &\\
0 & \cdots & 0 & 0 & t_k^k & 0\\
0 & \cdots & 0 & 0 & 0 & t_{k+1}^k \end{array} \right)
\end{gather*}
for any \( \nu^k_i, \mu^k_i \) and \( t^k_i \) in \( \C \). Here if \( (s_1,\ldots,s_{n-1}) \)
are the standard coordinates on the Lie algebra \( \R^{n-1} \) of \( (S^1)^{n-1} \) and
\( (\tau_1, \ldots, \tau_n) \) are the standard coordinates of the Lie algebra
\( \R^n \) of the maximal torus of \( \Un(n) \) consisting of the diagonal matrices, then
we identify \( \R^{n-1} \) with the subspace of \( \R^n \) defined by \( \tau_1 + \cdots + \tau_n = 0 \) via the relationship \( s_j = \tau_{j+1} + \cdots + \tau_n \) for \( 1 \leq j \leq n-1 \).
With respect to this identification, \( M_T \hkq T_H \) becomes the hypertoric
variety for \( T \) associated to the hyperplane arrangement in its Lie algebra
\( \lie{t} \) given by the root planes.
\end{remark}

\section{Stratifying the universal hyperk\"{a}hler 
implosion into  hyperk\"{a}hler strata}
The universal hyperk\"{a}hler implosion \( Q = M \hkq H \) for \( K = \SU(n) \)
is a singular space with a stratification into locally closed hyperk\"{a}hler submanifolds \( Q_{(S,\delta)} \) (cf.\,  \cite{DKS} Theorem 6.15). These strata
\( Q_{(S,\delta)} \) can be indexed by subsets
\begin{equation*}
S = \{(i_1,j_1), (i_2,j_2), \ldots, (i_p,j_p)\}
\end{equation*}
of \( \{1,\ldots,n\} \times \{1,\ldots,n\} \) with   \( i_1, \dots, i_p \) distinct and \(
  j_1 < j_2 < \dots < j_p \), and sequences \( \delta=(d_1,\dots,d_p) \) of strictly positive integers such that if \( 1 \leqslant k
  \leqslant n \) then
  \begin{equation*}
    m_k = k - \sum_{\substack{h:\\ 1 \leqslant h \leqslant p\\ i_h
    \leqslant k < j_h}} d_h 
  \end{equation*}
  satisfies \( 0 = m_0 \leqslant m_1 \leqslant \dots \leqslant m_n = n
  \). The open stratum
  \( Q_{(\emptyset,\emptyset)}= Q^{\hks} \), which is indexed by the
empty set \( S = \emptyset \) and the empty sequence \( \delta=\emptyset \),
consists of those elements of \( Q = M \hkq H \) represented by 
hyperk\"ahler stable quivers.

More generally, for any \( S \) and \( \delta \) as above, the stratum 
\( Q_{(S,\delta)} \) is the image of a hyperk\"ahler embedding into \( Q \) of
a hyperk\"ahler modification 
\( \hat{Q}_1^{\hks} \)
(in the sense of Definition \ref{hkmodification} below, following
\cite{DSmod}) of the open subset \( Q_1^{\hks} \) represented by hyperk\"ahler quivers in the hyperk\"ahler quotient
\begin{equation*}
Q_1 = M_1 \hkq H_S
\end{equation*}
where  \( M_1 \) is the space of quivers of the form
  \begin{equation}
    \label{quiverdagger}
    0 \stackrel[\beta_0]{\alpha_0}{\rightleftarrows}
    \C^{m_1}\stackrel[\beta_1]{\alpha_1}{\rightleftarrows}
    \C^{m_2}\stackrel[\beta_2]{\alpha_2}{\rightleftarrows}\dots
    \stackrel[\beta_{n-2}]{\alpha_{n-2}}{\rightleftarrows}
    \C^{m_{n-1}}
    \stackrel[\beta_{n-1}]{\alpha_{n-1}}{\rightleftarrows}
    \C^{m_n} = \C^n
  \end{equation}
and \( H_S \) is the subgroup of
\begin{equation*}
\prod_{k=1}^{n-1} \Un(m_k)  \leqslant \tilde{H} = 
\prod_{k=1}^{n-1} \Un(k)
\end{equation*}
defined as follows.
 
\begin{definition}
  \label{defHs}
  To any set \( S \) of pairs \( (i,j) \) with \( i,j \in
  \{1,\dots,n\} \) we can associate a subtorus \( T_S \) of \(
  {T} = (S^1)^{n-1} \) such that the Lie algebra of \( T_S \) is
  generated by the vectors \( e_{ij}= (0, \dots,0, 1, 1,\dots,1, 0,
  \dots,0) \), which have \( 1 \) in places \( i,\dots, j-1 \) and
  zero elsewhere, where \( i,j \) range over all pairs \( (i,j) \in S
  \) with \( i < j \).
  Now consider the short
  exact sequence
  \begin{equation*}
    1 \rightarrow H=\prod_{k=1}^{n-1} \SU(m_k) \rightarrow 
    \prod_{k=1}^{n-1} \Un(m_k) 
    \stackrel{\phi}{\rightarrow} T \rightarrow 1, 
  \end{equation*}
  where \( \phi \) is the obvious product of determinant maps, and
  define \( H_S \) to be the preimage
  \begin{equation*}
    H_S = \phi^{-1} (T_S)
  \end{equation*}
  of \( T_S \) in 
 \(  
\prod_{k=1}^{n-1} \Un(m_k)  \).
\end{definition}

\begin{definition} \label{hkmodification}
 Motivated by the concept of hyperk\"ahler modification introduced in \cite{DSmod}, we define \( \hat{Q}_1^{\hks} \) as
\begin{equation*}
\hat{Q}_1^{\hks} = (Q_1^{\hks} \times (\mathbb{H} \setminus \{ 0\}) ^\ell) \hkq (S^1)^\ell.
\end{equation*}
Here  \( \ell = |L| \) is the size of the set 
\begin{equation*}
L= \{(h,k): 1 \leqslant h \leqslant p, \,\, i_h \leqslant k < j_h -1 \}.
\end{equation*}
The action of \( (S^1)^\ell \) on \( (\mathbb{H} \setminus \{ 0\}) ^\ell \)
is the standard one, while the 
action of \( (S^1)^\ell \) on  \( Q_1 \) is given by the homomorphism
\begin{equation*}
(S^1)^\ell \to {T} = (S^1)^{n-1}
\end{equation*}
whose restriction to the copy of \( S^1 \) in \( (S^1)^\ell \) labelled by \( (h,k) \in L \)
sends the standard generator of the Lie algebra of \( S^1 \) to the vector
\begin{equation*}
e_{k+1\,\, j_h} = (0, \ldots, 0,1,1,\ldots,1,0,\ldots,0)
\end{equation*}
in the Lie algebra of \( {T} = (S^1)^{n-1} \) which has 1 in places
\( k+1 \) to \( j_h - 1 \) and 0 elsewhere. 
\end{definition}
The stratum \( Q_{(S,\delta)} \) is the image of a hyperk\"ahler embedding
  \begin{equation*}
    \hat{Q}_1^{\hks} \to Q
  \end{equation*}
  which is \( \SU(2) \)-equivariant and is defined as follows.
 Consider a quiver
\eqref{quiverdagger}
  together with an element \( (\gamma_k^{(h)}) \) of \( \mathbb{H}^\ell  \)
such that  \( \gamma_k^{(h)} = \alpha_k^{(h)} + j \beta_k^{(h)}  \)
for \(1 \leqslant h \leqslant p \) and  \( i_h \leqslant k < j_h
  - 1 \),
 satisfying the \( H_{S} \)-hyperk\"ahler moment map equations
  \begin{gather*}
    \alpha_i \beta_i - \beta_{i+1}
    \alpha_{i+1} = \lambda^\C_{i+1} I, \\
    \alpha_i \alpha_i^* - \beta_i^* \beta_i +
    \beta_{i+1} \beta_{i+1}^* - \alpha_{i+1}^* \alpha_{i+1} =
    \lambda^\R_{i+1} I,  
  \end{gather*} 
 for \(1 \leqslant i \leqslant n-1 \), where \( \sum_{k = i_h}^{j_h-1} \lambda_k^\C = 0 \) and \( \sum_{k =
  i_h}^{j_h-1} \lambda_k^\R=0 \) for \( 1 \leqslant h \leqslant q \),
and the \( (S^1)^\ell \)-hyperk\"ahler moment map equations
\begin{gather*}
\alpha_k^{(h)}\beta_k^{(h)} = \lambda^\C_{k+1} + \lambda^\C_{k+2}
   + \dots + \lambda^\C_{j_h -1} \quad\text{and}\\
   \abs{\alpha_k^{(h)}}^2 - \abs{\beta_k^{(h)}}^2 = \lambda^\R_{k+1} +
   \lambda^\R_{k+2} + \dots + \lambda^\R_{j_h -1} 
 \end{gather*}
for \(1 \leqslant h \leqslant p \) and  \( i_h \leqslant k < j_h
  - 1 \). Our embedding takes the \( H_{S} \times (S^1)^\ell \)-orbit of
this configuration  to the \( H \)-orbit of the quiver
  \begin{equation}
    \label{quiverdoubledagger}
    0 \stackrel[\tilde{\beta}_0]{\tilde{\alpha}_0}{\rightleftarrows}
    \dots \rightleftarrows 
    \C^{m_k} \oplus \bigoplus_{h :\, i_h \leqslant k < j_h} \C^{d_h} 
    \rightleftarrows 
    \dots
    \stackrel[\tilde{\beta}_{r-1}]{\tilde{\alpha}_{r-1}}{\rightleftarrows} 
    \C^n
  \end{equation}
  which is the orthogonal direct sum of \eqref{quiverdagger} with the quivers given
for \( 1
  \leqslant h \leqslant p \) by
  \begin{equation*}
    \C^{d_h}
    \stackrel[\beta_{i_h}^{(h)}]{\alpha_{i_h}^{(h)}}{\rightleftarrows}
    \C^{d_h} \rightleftarrows \dots \rightleftarrows \C^{d_h}
    \stackrel[\beta_{j_h-2}^{(h)}]{\alpha_{j_h-2}^{(h)}}{\rightleftarrows}
    \C^{d_h}    
  \end{equation*}
  in the places \( i_h,i_h + 1, \dots ,j_h -1 \).  Here the maps \(
  \alpha_k^{(h)} \), \( \beta_k^{(h)} \), for \( i_h \leqslant k < j_h
  - 1 \), are multiplication by the complex scalars, also denoted by
  \( \alpha_k^{(h)} \), \( \beta_k^{(h)} \), that satisfy
 \( \alpha_k^{(h)} + j \beta_k^{(h)} = \gamma_k^{(h)} \).

\begin{remark} \label{remhypertoricstrat}
Note that the stabiliser in \( T=T_{\tilde{H}}/T_H \) of 
any \( \q \in Q_{(S,\delta)} \) is  the subtorus \( T_S \) of \( T \) defined in Definition \ref{defHs}, which is the product \( (S^1)^p \) of \( p \) copies of \( S^1 \)
where the \( j \)th copy of \( S^1 \) acts by scalar multiplication on the summand
 \begin{equation*}
    \C^{d_h}
    \stackrel[\beta_{i_h}^{(h)}]{\alpha_{i_h}^{(h)}}{\rightleftarrows}
    \C^{d_h} \rightleftarrows \dots \rightleftarrows \C^{d_h}
    \stackrel[\beta_{j_h-2}^{(h)}]{\alpha_{j_h-2}^{(h)}}{\rightleftarrows}
    \C^{d_h}    
  \end{equation*}
of the quiver \( \q \).
\end{remark}

\section{A refined stratification of the universal hyperk\"ahler implosion}
In the last section we recalled the stratification of the universal hyperk\"ahler implosion \( Q= M\hkq H \) for \( K=\SU(n) \) into hyperk\"ahler strata  
\( Q_{(S,\delta)} \).
 In this section we will refine this stratification to obtain strata which
are not in general hyperk\"ahler but which reflect the structure of the group \( K=\SU(n) \); in particular we would like to find a description of the
universal hyperk\"ahler implosion which permits generalisation to other compact groups.
First let us consider the hyperk\"ahler moment map 
\begin{equation*}
\mu_{(S^1)^{n-1}}: Q \to (\R^3)^{n-1}
\end{equation*}
for the induced action of 
\begin{equation*}
T = (S^1)^{n-1} = \prod_{k=1}^{n-1} \Un(k)/\SU(k) = \tilde{H}/H
\end{equation*}
on \( Q = M \hkq H \). We are abusing notation slightly here by using the same symbol \( T \) to denote both \( (S^1)^{n-1} \) and the standard maximal torus of \( K=\SU(n) \). These tori are of course isomorphic; we will always make the particular choice of identification given in Remark \ref{remhypertoric}, so
that the restriction to \( Q_T^{\hks} \) of the action of \( T \) as a subgroup of
\( K \) agrees with the restriction of the action of \( T \) identified with \( (S^1)^{n-1} \). 
This  hyperk\"ahler moment map takes a quiver 
 which satisfies the  equations
(\ref{eq:mmcomplex}),(\ref{eq:mmreal})
to the element of 
\begin{equation*}
\lie{t} \otimes \R^3 = (\R^3)^{n-1} = (\C \oplus \R)^{n-1}
\end{equation*}
given by
\begin{equation*}
(\lambda_1, \ldots, \lambda_{n-1})= (\lambda_1^\C,\lambda_1^\R, \ldots, \lambda_{n-1}^\C,\lambda_{n-1}^\R).
\end{equation*}
We will define a stratification of \( (\R^3)^{n-1} \) which we can pull back via the restriction of \( \mu_{(S^1)^{n-1}} \) to each hyperk\"ahler stratum \( Q_{(S,\delta)} \)
of \( Q \).
\begin{definition} If \( (\lambda_1, \ldots, \lambda_{n-1}) \in (\R^3)^{n-1} \) there is an associated equivalence relation \( \sim \) on \( \{1,\dots,n\} \)
such that if \( 1 \leqslant i<j \leqslant n \) then
\begin{equation*}
  i \sim j \iff \sum_{k=i}^{j-1} \lambda_k = 0
  \ \text{in}\ \R^3 .
\end{equation*}
There is thus a stratification of \( (\R^3)^{n-1}=\lie{t} \otimes \R^3 \) into strata
\( (\R^3)^{n-1}_{\sim} = (\lie{t} \otimes \R^3)_\sim \),
indexed by the set of equivalence relations  \( \sim \) on \( \{1,\dots,n\} \),
where
\begin{equation*}
(\R^3)^{n-1}_{\sim} = \{ (\lambda_1, \ldots, \lambda_{n-1}) \in (\R^3)^{n-1}:
\mbox{ if } 1 \leqslant i<j \leqslant n \mbox{ then }
\end{equation*}
\begin{equation*}
i \sim j \iff \sum_{k=i}^{j-1} \lambda_k = 0
  \ \text{in}\ \R^3\}.
\end{equation*}
\end{definition}
\begin{remark} \label{remksim}
Under the identification of \( T \) with \( (S^1)^{n-1} \) given in 
Remark~\ref{remhypertoric} this stratification of \( (\R^3)^{n-1}= \lie{t} \otimes \R^3 \)
is the tensor product with \( \R^3 \) of the stratification of \( \lie{t} \) associated to
the hyperplane arrangement given by the root planes in \( \lie{t} \).
Note also that an equivalence relation \( \sim \) on   \( \{1, \ldots, n\} \) determines
and is determined by a subgroup \( K_\sim \) of \( K=\SU(n) \), where \( K_\sim \) is the stabiliser in \( K \) of any \(  (\lambda_1, \ldots, \lambda_{n-1}) \in \lie{t} \otimes \R^3 \) which lies in the stratum \( (\R^3)^{n-1}_{\sim} \) of \( (\R^3)^{n-1} \)
identified with \(  \lie{t} \otimes \R^3 \) as in Remark \ref{remhypertoric}.
\end{remark}
Observe that each stratum \( (\R^3)^{n-1}_{\sim} \) is an open subset of a linear subspace of the real vector space \( (\R^3)^{n-1} \).
\begin{definition} \label{QSdeltasim}
Given a hyperk\"ahler stratum \( Q_{(S,\delta)} \) of \( Q \) as in \S4, together with an equivalence relation \( \sim \) on \( \{1, \ldots, n\} \), define
\begin{equation*}
Q_{(S,\delta,\sim)} = Q_{(S,\delta)} \cap \mu_{(S^1)^{n-1}}^{-1}((\R^3)^{n-1}_{\sim}),
\end{equation*}
that is,  the inverse image of the stratum \( (\R^3)^{n-1}_{\sim} \) in \( (\R^3)^{n-1} \)
under the restriction to \( Q_{(S,\delta)} \) of  the hyperk\"ahler moment map 
\( \mu_{(S^1)^{n-1}}: Q \to (\R^3)^{n-1} \).
\end{definition}
\begin{remark} 
\label{lem5.14} 
We recall from \cite{DKS} that, given  a quiver 
 which satisfies the complex moment map equations
(\ref{eq:mmcomplex}),
 we may decompose
each space in the quiver into
generalised eigenspaces \( \ker (\alpha_i \beta_i - \tau I)^m \) of 
\( \alpha_i \beta_i \).
We showed that \( \beta_i \) restricts
to a map
\begin{equation}
  \label{betai}
  \beta_i \colon \ker (\alpha_i \beta_i - \tau I)^m \rightarrow
  \ker(\alpha_{i-1} \beta_{i-1} - (\lambda_i^\C + \tau)I)^m. 
\end{equation}
Similarly \( \alpha_i \) restricts to a map
\begin{equation}
  \label{alphai}
  \alpha_i \colon \ker (\alpha_{i-1} \beta_{i-1} - (\lambda_i^\C +
  \tau) I)^m \rightarrow \ker (\alpha_i \beta_i - \tau I)^m. 
\end{equation}
Moreover we showed the maps \eqref{betai} and \eqref{alphai} are bijective
unless \( \tau = 0 \).
It follows that \( \tau \neq 0 \) is an eigenvalue of \( \alpha_i
\beta_i \) if and only if \( \tau + \lambda_i^\C \neq \lambda_i^\C \)
is an eigenvalue of \( \alpha_{i-1} \beta_{i-1} \). Moreover \( \alpha_i \beta_i 
\) has zero as an eigenvalue and \( \alpha_i, \beta_i \) restrict to
maps between the associated generalised eigenspace with eigenvalue \(
0 \) and the generalised eigenspace for \( \alpha_{i-1} \beta_{i-1} \)
associated to \( \lambda_i^\C \).  One can deduce
 (cf. Lemma 5.14 of \cite{DKS}) that
  \begin{equation*}
    \alpha_{n-1} \beta_{n-1} - \frac1n \tr(\alpha_{n-1} \beta_{n-1}) I_n \in \sln(n, \C)
  \end{equation*}
now has eigenvalues \( \kappa_1,\dots,\kappa_n \), where
  \begin{equation*}
    \begin{split}
      \kappa_j &= \frac1n\Bigl( \lambda^\C_1 + 2 \lambda_2^\C + \dots +
      ({j-1}) \lambda^\C_{j-1} \eqbreak[4] -(n-j) \lambda_j^\C -
      (n-{j-1})\lambda_{j+1}^\C - \dots - \lambda_{n-1}^\C\Bigr).
    \end{split}
  \end{equation*}
  In particular if \( i<j \) then
  \begin{equation} \label{eqnkappa}
    \kappa_j - \kappa_i = \lambda_i^\C + \lambda_{i+1}^\C + \dots +
    \lambda_{j-1}^\C . 
  \end{equation}
We deduce that if \( i \sim j \) then we have equality of the eigenvalues
\( \kappa_i \) and \( \kappa_j \).
\end{remark}

We would like to find an indexing set for the subsets \(  Q_{(S,\delta, \sim)}  \)
which reflects the group theoretic structure of \( K \). As we observed in
Remark \ref{remksim} the choice of \( \sim \) corresponds to the choice of a
subgroup \( K_\sim \) of \( K \) which is the 
compact real form of a Levi subgroup of \( K_\C \);
 this subgroup \( K_\sim \) is the centraliser of \( \mu_{(S^1)^{n-1}}(\q) \in \lie{t} \otimes \R^3 \) for
any \( \q \in  Q_{(S,\delta, \sim)}  \).
Our next aim is to show that
once \( \sim \) or equivalently \( K_\sim \) is chosen, the choice of \( (S,\delta) \) 
corresponds to the choice of a nilpotent adjoint orbit \( \mathcal{O} \subseteq (\lie{k}_\sim)_\C \). We will see that  if \( \q \in  Q_{(S,\delta, \sim)}  \) then,
for a generic choice of complex structure, when we 
decompose \( \C^n \) as the direct sum of the generalised eigenspaces of
\begin{equation*}
\alpha_{n-1} \beta_{n-1} - \frac{1}{n}{\rm tr} (\alpha_{n-1} \beta_{n-1}) I_n \in \lie{k}_\C
\end{equation*}
then the subgroup of \( K_\C \) preserving this decomposition is conjugate to
\( (K_\sim)_\C \). In fact there is some \( g \in K_\C \) such that this subgroup is
\( g(K_\sim)_\C)g^{-1} \) and when we write the
element  
\begin{equation*}
\alpha_{n-1} \beta_{n-1} - \frac{1}{n}{\rm tr} (\alpha_{n-1} \beta_{n-1}) I_n
\end{equation*}
of \( \lie{k}_\C \) as the sum of commuting nilpotent and semisimple elements of \( \lie{k} \),
the semisimple element is the conjugate by \( g \) of
\( \mu_{(S^1)^{n-1}}^\C(q) \)
and the nilpotent element lies in the conjugate by \( g \) of the \( (K_\sim)_\C \)-orbit \( \mathcal{O} \). 
To see this, we
need to recall from \cite{DKS} more about the hyperk\"ahler strata \( Q_{(S,\delta)} \). So suppose that a quiver 
 satisfies the hyperk\"ahler moment map equations
(\ref{eq:mmcomplex}) and (\ref{eq:mmreal}),
and lies in the subset \( Q_{(S,\delta,\sim)} \),
so that it lies in \( Q_{(S,\delta)} \) and 
\begin{equation*}
  i \sim j \iff \sum_{k=i}^{j-1} \lambda_k = 0
  \ \text{in}\ \R^3 .
\end{equation*}
Notice that \( Q_{(S,\delta,\sim)} \) is preserved by the rotation action of \( \SU(2) \)
on \( \R^3 \), and that given 
\begin{equation*}
(\lambda_1, \ldots, \lambda_{n-1})= (\lambda_1^\C,\lambda_1^\R, \ldots, \lambda_{n-1}^\C,\lambda_{n-1}^\R) \in (\R^3)^{n-1},
\end{equation*}
by applying a generic element of \( \SU(2) \) to rotate the complex structures and hence the decomposition \( \R^3 = \C \oplus \R \), we can assume that  if
\( 1 \leqslant i < j \leqslant n \) then   
\begin{equation} \label{reg}
  \sum_{k=i}^{j-1} \lambda_k = 0
  \ \text{in}\ \R^3 \iff \sum_{k=i}^{j-1} \lambda_k^\C = 0
  \ \text{in}\ \C.
\end{equation}
Thus 
\begin{equation*}
Q_{(S,\delta,\sim)} = \SU(2) Q_{(S,\delta,\sim)}^\circ
\end{equation*}
where \( Q_{(S,\delta,\sim)}^\circ \) is the open subset of \( Q_{(S,\delta,\sim)} \) represented by those quivers 
 in \( Q_{(S,\delta,\sim)} \) 
such that
\begin{equation*}
  i \sim j \iff \sum_{k=i}^{j-1} \lambda_k = 0
  \ \text{in}\ \R^3 \iff \sum_{k=i}^{j-1} \lambda_k^\C = 0
  \ \text{in}\ \C.
\end{equation*}
\begin{remark} \label{remkappa}
It now follows immediately from (\ref{eqnkappa}) that for a quiver \( \q \) in 
\( Q_{(S,\delta,\sim)}^\circ \)  we have equality of the eigenvalues
\( \kappa_i \) and \( \kappa_j \) of 
\begin{equation*}
    \alpha_{n-1} \beta_{n-1} - \frac1n \tr(\alpha_{n-1} \beta_{n-1}) I_n \in \sln(n, \C)
  \end{equation*}
 if and only if  \( i \sim j \) . In particular if  \( \q\in 
Q_{(S,\delta,\sim)}^\circ \) then \( (K_\sim)_\C \) is the subgroup of \( K_\C \) which 
preserves the decomposition of \( \q \) into the subquivers determined by the generalised eigenspaces of the compositions \( \alpha_i\beta_i \) as in Remark~\ref{lem5.14} above.
\end{remark}
Let us suppose now that our quiver 
\( \q \) lies in
\( Q_{(S,\delta,\sim)}^\circ \). As in Remark~\ref{lem5.14} 
we can decompose it into  a direct sum of subquivers
  \begin{equation*} \cdots 
    V_i^j
    \stackrel[\beta_{i,j}]{\alpha_{i,j}}{\rightleftarrows}
    V_{i+1}^j \cdots 
  \end{equation*}
determined by the generalised eigenspaces (with eigenvalues \( \tau_{i+1, j} \))
of the compositions \( \alpha_i\beta_i \), such that
\begin{equation*}
\alpha_{i,j} \beta_{i,j} - \beta_{i+1,j} \alpha_{i+1,
  j} = \lambda_{i+1}^\C
\end{equation*}
 and \( \alpha_{i,j}
  \) and \( \beta_{i,j} \) are isomorphisms unless \( \tau_{i+1, j}=0
  \).
  If for some \( j \) we have that \( \alpha_{k,j}, \beta_{k,j} \) are
  isomorphisms for \( i+1 \leqslant k < s \) but not for \( k=i,s \), then
  it follows  that \( \tau_{i+1,j} =
  \tau_{s+1,j}=0 \), hence \( \sum_{k=i+1}^s \lambda_k^\C =0 \), and
so since the quiver lies in \( Q_{(S,\delta,\sim)}^\circ \) we have
\begin{equation*}
\sum_{k=i+1}^s \lambda_k =0 \in \R^3.
\end{equation*}
This means that the quiver satisfies the hyperk\"ahler moment map equations not just for \( H = \prod_{k=1}^{n-1} \SU(k) \)
but for its extension
\begin{equation*}
H_{\{(i,s)\} }
\end{equation*}
by \( S^1 \) in the sense of Definition \ref{defHs}. In particular it satisfies the real moment map equations for this subgroup of \( \tilde{H} = \prod_{k=1}^{n-1} \Un(k) \), and thus its orbit under the complexification \( (H_{\{(i,s)\} })_\C \)
of \( H_{\{(i,s)\} } \) is closed.
\begin{remark}
\label{remdim}
Note that  \(
  \alpha_{i,j} \) and \( \beta_{i,j} \) are isomorphisms  when \( \tau_{i+1, j} \) is non-zero, and hence for each \( i \) there is exactly one \( j \) such that \( \tau_{i+1, j} = 0 \) and then
\begin{equation*}
\dim V_{i+1}^j = 1 + \dim V_{i}^j.
\end{equation*}
\end{remark}
  In the case when \( \tau_{i+1, j} \) is non-zero, and hence \(
  \alpha_{i,j} \) and \( \beta_{i,j} \) are isomorphisms, we may
   contract the subquiver 
 \begin{equation*} \cdots 
    V_i^j
    \stackrel[\beta_{i,j}]{\alpha_{i,j}}{\rightleftarrows}
    V_{i+1}^j \cdots 
  \end{equation*}
by replacing
  \begin{equation*}
    V_{i-1}^j
    \stackrel[\beta_{i-1,j}]{\alpha_{i-1,j}}{\rightleftarrows}
    V_i^j
    \stackrel[\beta_{i,j}]{\alpha_{i,j}}{\rightleftarrows}
    V_{i+1}^j  
    \stackrel[\beta_{i+1,j}]{\alpha_{i+1,j}}{\rightleftarrows}
    V_{i+2}^j
  \end{equation*}
  with
  \begin{equation*}
    V_{i-1}^j
    \stackrel[\beta_{i-1,j}]{\alpha_{i-1,j}}{\rightleftarrows}
    V_i^j
    \stackrel[(\alpha_{i,j})^{-1}\beta_{i+1,j}]{\alpha_{i+1,j}  \alpha_{i,j}}{\rightleftarrows}
        V_{i+2}^j,
  \end{equation*}
  and then the complex moment map equations are satisfied with
  \begin{equation*}
    \alpha_{i-1,j}\beta_{i-1,j} - (\alpha_{i,j})^{-1}\beta_{i+1,j} \alpha_{i+1,j}  \alpha_{i,j} = \lambda_{i-1}^\C + \lambda_i^\C.
  \end{equation*}
   Moreover if we choose an identification of \( V_{i+1}^j \) with \( V_i^j \)
  and apply the action of \( \SL(V_{i,j}) \) to set \( \alpha_{i,j} \)
  to be a non-zero scalar multiple \( aI \) of the identity, then \(
  \beta_{i,j} \) is determined by \( \alpha_{i-1,j}, \alpha_{i+1,j},
  \beta_{i-1,j}, \beta_{i+1,j} \) via the equations
  \eqref{eq:mmcomplex} once we know the scalars \( a \) and \(
  \lambda_i^\C \). We refer the reader to \cite{DKS} for more details
on contraction.
After performing such contractions whenever \( \tau_{i+1, j} \) is non-zero,
we obtain contracted quivers 
  \begin{equation*} \cdots \,\,\,\,
    V_{i}^j
    \stackrel[\alpha_{i,j}^{-1} \cdots \alpha_{s-2,j}^{-1} \beta_{s-1,j}]{\alpha_{s-1,j}\cdots \alpha_{i,j}}{\rightleftarrows}
    V_s^j \,\,\,\, \cdots 
  \end{equation*}
where
\begin{equation*}
V^j_i \cong V^j_{i+1} \cong \cdots \cong V^j_{s-1}
\end{equation*}
and \( \dim V^j_{s-1} = \dim V^j_s - 1 \). Moreover each of these contracted quivers
satisfies the complex moment map equations for the induced action of 
\begin{equation*}
\prod_{i: \dim V^j_{i-1} < \dim V^j_i} \GL(V^j_i)
\end{equation*}
and its orbit under the action of this complex group is closed. 
It then follows from~\cite{KS}
Theorem 2.1 (cf. \cite{DKS} Proposition 5.16) that each contracted subquiver is the direct sum of a quiver of the form 
  \begin{equation*}
    0=V_0^{(*)}
    \stackrel[\beta^{(*)}_0]{\alpha^{(*)}_0}{\rightleftarrows}
    V^{(*)}_1
    \stackrel[\beta^{(*)}_1]{\alpha_1^{(*)}}{\rightleftarrows}
    V^{(*)}_2
    \stackrel[\beta^{(*)}_2]{\alpha^{(*)}_2}{\rightleftarrows}
    \dots
    \stackrel[\beta^{(*)}_{r-2}]{\alpha^{(*)}_{r-2}}{\rightleftarrows}
    V^{(*)}_{n-1}
    \stackrel[\beta^{(*)}_{r-1}]{\alpha^{(*)}_{r-1}}{\rightleftarrows}
    V^{(*)}_n \leqslant \C^n,  
  \end{equation*} 
  where \( V_j^{(*)}= 0 \) for \( 0 \leqslant j \leqslant k \) and \(
  \alpha^{(*)}_j \) is injective and \( \beta^{(*)}_j \) is surjective
  for \( k<j<n \), and a quiver
  \begin{equation*}
    0 = V^{(0)}_0 \rightleftarrows
    V^{(0)}_1 \rightleftarrows
    V^{(0)}_2 \rightleftarrows \dots
    \rightleftarrows V^{(0)}_{n-1}
    \rightleftarrows V^{(0)}_n = \{ 0 \}
  \end{equation*} 
  in which all maps are \( 0 \). It also follows from the same
theorem that the direct sum of the contracted subquivers is completely
determined (modulo the action of \( \prod_{i: \dim V^j_{i-1} < \dim V^j_i} \GL(V^j_i) \)) by the nilpotent element of \( (\lie{k}_\sim)_\C \) given by the
sum of the complex moment maps \( \alpha_{n-1}^{(*)} \beta_{n-1}^{(*)} \). Furthermore,
given \( \sim \), the adjoint orbit of this nilpotent element in \( (\lie{k}_\sim)_\C \)
corresponds precisely to determining the dimensions of the various vector spaces
\( V_j^{(*)} \) and \( V_j^{(0)} \). 
To see this, observe first that by Remarks \ref{remkappa} and \ref{remdim} the
equivalence relation \( \sim \) determines the dimensions of the generalised eigenspaces of the compositions \( \alpha_i\beta_i \) and determines how the corresponding subquivers are contracted. Also the nilpotent cone for \( (K_\sim)_\C \) in
\( (\lie{k}_\sim)_\C \) is the nilpotent cone for the product 
\( [(K_\sim)_\C,(K_\sim)_\C] \) of special linear groups. Since \( (K_\sim)_\C \) is 
the product of its centre \( Z((K_\sim)_\C) \) and its commutator subgroup
\( [(K_\sim)_\C,(K_\sim)_\C] \), the nilpotent orbits for \( (K_\sim)_\C \) are
the same as the nilpotent orbits for \( [(K_\sim)_\C,(K_\sim)_\C] \), and thus
are given by products of nilpotent orbits in the special linear groups corresponding to the equivalence classes of \( \sim \). These nilpotent orbits in
special linear groups are determined in turn by their Jordan types, which
correspond exactly to the data given by the dimensions of the kernels of their
powers.
The contracted quivers satisfy \( \alpha_i^{(*)} \beta_i^{(*)} =
\beta_{i+1}^{(*)} \alpha_{i+1}^{(*)} \) for all \( i \), and so
\begin{equation*}
(\alpha_{n-1}^{(*)} \beta_{n-1}^{(*)} )^s = 
\alpha_{n-1}^{(*)} \alpha_{n-2}^{(*)}\ldots
\alpha_{n-s}^{(*)} \beta_{n-s}^{(*)} \ldots \beta_{n-2}^{(*)} \beta_{n-1}^{(*)}
.
\end{equation*}
Since the \( \alpha_{i}^{(*)} \) are injective and the \( \beta_{i}^{(*)} \)
are surjective this composition has rank \( \dim V_s^{(*)} \) and
nullity \( \dim V_{n-1}^{(*)} - \dim V_s^{(*)} \). Finally note that the sums
\( \dim V_s^{(*)} + \dim V_s^{(0)} \) are determined by the dimensions of the
vector spaces in the contracted subquiver.
\begin{remark} \label{sumdecomp}
Recall that since our quiver 
\( \q \) lies in \( Q_{(S,\delta)} \) it is the direct sum of a hyperk\"ahler stable quiver of the form
\begin{equation} \label{eq6.8}
        0 \stackrel[\beta_0]{\alpha_0}{\rightleftarrows}
    \C^{m_1}\stackrel[\beta_1]{\alpha_1}{\rightleftarrows}
    \C^{m_2}\stackrel[\beta_2]{\alpha_2}{\rightleftarrows}\dots
    \stackrel[\beta_{n-2}]{\alpha_{n-2}}{\rightleftarrows}
    \C^{m_{n-1}}
    \stackrel[\beta_{n-1}]{\alpha_{n-1}}{\rightleftarrows}
    \C^{m_n} = \C^n
  \end{equation}
 with quivers given
for \( 1
  \leqslant h \leqslant p \) by
  \begin{equation*}
    \C^{d_h}
    \stackrel[\beta_{i_h}^{(h)}]{\alpha_{i_h}^{(h)}}{\rightleftarrows}
    \C^{d_h} \rightleftarrows \dots \rightleftarrows \C^{d_h}
    \stackrel[\beta_{j_h-2}^{(h)}]{\alpha_{j_h-2}^{(h)}}{\rightleftarrows}
    \C^{d_h}    
  \end{equation*}
  in the places \( i_h,i_h + 1, \dots ,j_h -1 \), where the maps \(
  \alpha_k^{(h)} \), \( \beta_k^{(h)} \), for \( i_h \leqslant k < j_h
  - 1 \), are multiplication by complex scalars.
The latter correspond in the description above to the zero summands of the
contracted subquivers, while the former is the direct sum of  the summands
of the form 
  \begin{equation*}
    0=V_0^{(*)}
    \stackrel[\beta^{(*)}_0]{\alpha^{(*)}_0}{\rightleftarrows}
    V^{(*)}_1
    \stackrel[\beta^{(*)}_1]{\alpha_1^{(*)}}{\rightleftarrows}
    V^{(*)}_2
    \stackrel[\beta^{(*)}_2]{\alpha^{(*)}_2}{\rightleftarrows}
    \dots
    \stackrel[\beta^{(*)}_{n-2}]{\alpha^{(*)}_{n-2}}{\rightleftarrows}
    V^{(*)}_{n-1}
    \stackrel[\beta^{(*)}_{n-1}]{\alpha^{(*)}_{n-1}}{\rightleftarrows}
    V^{(*)}_n \leqslant \C^n,  
  \end{equation*} 
  where \( V_j^{(*)}= 0 \) for \( 0 \leqslant j \leqslant k \) and \(
  \alpha^{(*)}_j \) is injective and \( \beta^{(*)}_j \) is surjective
  for \( k<j<n \).
\end{remark}
\begin{remark}
\label{remequiv}
It follows that once the equivalence relation \( \sim \) or its corresponding
subgroup \( K_\sim \) of \( K \) is fixed, the choice of index \( (S,\delta) \) such that 
\( Q_{(S,\delta,\sim)} \) is nonempty corresponds exactly to a nilpotent  adjoint orbit \( \mathcal{O} \subseteq (\lie{k}_\sim)_\C \) 
 for the complexification \( (K_\sim)_\C \) of \( K_\sim \). Thus we can make the following definition.
\end{remark}
\begin{definition} \label{defnsimnil}
Let \( \sim \) be an equivalence relation on \( \{1, \ldots, n\} \) and let 
\( \mathcal{O} \) 
 be a nilpotent adjoint orbit for \( (K_\sim)_\C \). 
 Then we will denote by
\( Q_{[\sim,\mathcal{O}]} \) the subset \( Q_{(S,\delta,\sim)} \) of \( Q \) indexed by
the corresponding \( (S,\delta,\sim) \), and  we will denote by
\( Q_{[\sim,\mathcal{O}]}^\circ  \) its open subset \( Q_{(S,\delta,\sim)}^\circ  \).
\end{definition}

\begin{remark}
We also
remark that the above analysis shows
 that the subset \( Q_{(S,\delta,\sim)} \) of \( Q \) is empty unless the subset \( S \) of
\( \{1,\dots,n\} \times
  \{1,\dots,n\} \) is contained in \( \sim \) (where the equivalence relation \( \sim \) on 
\( \{1,\dots,n\} \) is formally identified with the subset
\begin{equation*}
\{(i,j) \in \{1,\dots,n\} \times
  \{1,\dots,n\}:i \sim j\}
\end{equation*}
of \(   \{1,\dots,n\} \times
  \{1,\dots,n\} \)).
Thus  \( Q_{(S,\delta)} \) is the disjoint union 
\begin{equation*} Q_{(S,\delta)} = \coprod_{\sim} Q_{(S,\delta, \sim)} 
  \end{equation*}
over all the equivalence relations \( \sim \) containing \( S \), and 
  \begin{equation} \label{newstrat}
    Q = \coprod_{S,\delta, \sim} Q_{(S,\delta, \sim)} 
  \end{equation}
  is the disjoint union over all choices of \( S \) and \( \delta
  \) and equivalence relations \( \sim \) containing \( S \). Equivalently 
 \( Q \) is the disjoint union 
\begin{equation*} Q = \coprod_{\sim, \mathcal{O}} Q_{[\sim,\mathcal{O}]} 
  \end{equation*}
over all  equivalence relations \( \sim \) on \( \{1,\ldots,n\} \) and all nilpotent adjoint orbits \( \mathcal{O} \) in 
 \( (\lie{k}_\sim)_\C \).
\end{remark}

\begin{remark} \label{rem5.15}
Notice that the values at a point in \( Q \) of the hyperk\"ahler moment maps
for the actions on \( Q \) of \( K = \SU(n) \) and \( T = (S^1)^{n-1} \) determine the
stratum \( Q_{(S,\delta,\sim)} \) (or equivalently \( Q_{[\sim, \mathcal{O}]} \))
to which the quiver belongs. For the value 
\( (\lambda_1, \dots, \lambda_{n-1}) \) of \( \mu_{(S^1)^{n-1}} \) determines the
equivalence relation \( \sim \) and also the generic choices of complex structures
for which 
  \begin{equation*}
  \sum_{k=i}^{j-1} \lambda_k = 0
  \ \text{in}\ \R^3 \iff \sum_{k=i}^{j-1} \lambda_k^\C = 0
  \ \text{in}\ \C.
\end{equation*}
Moreover for such choices of complex structures the quiver decomposes as a direct sum of subquivers determined by the generalised eigenspaces of the 
composition \( \alpha_{n-1}\beta_{n-1} \) (which is given by the complex
moment map for the action of \( K \)), and
it follows 
 that the Jordan type of \( \alpha_{n-1}\beta_{n-1} \)
determines the nilpotent orbit \( \mathcal{O} \) in \( (\lie{k}_\sim)_\C \),
or equivalently as above
 the data \( S \) and \( \delta \).
\end{remark}

\section{Using Jordan canonical form}
In this section we will use Jordan canonical form to study hyperk\"ahler quivers
as at (\ref{eq6.8}) in order to find standard forms in the next section for quivers in a corresponding stratum \( Q_{[\sim,\mathcal{O}]} \).
We have the following description of such quivers from \cite{DKS} Proposition 7.2.

\begin{proposition}
  \label{betasurj} Let \( 0=m_0 \leqslant m_1 \leqslant 
 \cdots \leqslant m_n = n \) and let \( V_i = \C^{m_i} \) for \( 0 \leqslant i \leqslant n \).
  Consider quivers 
of the form 
  \begin{equation*}
    0=V_0
    \stackrel[\beta_0]{\alpha_0}{\rightleftarrows}
    V_1
    \stackrel[\beta_1]{\alpha_1}{\rightleftarrows}
    V_2
    \stackrel[\beta_2]{\alpha_2}{\rightleftarrows}
    \dots
    \stackrel[\beta_{n-2}]{\alpha_{n-2}}{\rightleftarrows}
    V_{n-1}
    \stackrel[\beta_{n-1}]{\alpha_{n-1}}{\rightleftarrows}
    V_n = \C^n,  
  \end{equation*} 
  where 
each  \( \beta_j \) is surjective 
 and  the complex
  moment map equations for \( H=\prod_{i=1}^{n-1} \SU(m_i) \) are satisfied.
The set of such quivers 
  modulo the action of \( H_\C=\prod_{i=1}^{n-1} \SL(m_i, \C) \) may
  be identified with
  \begin{equation*}
    K_\C  \times_{[P,P]} [\p, \p]^\circ
  \end{equation*}
  where \( P \) is the parabolic subgroup of \( K_\C = \SL(n, \C) \) associated to the flag \(
  (m_{1}, \dots, m_n=n) \), and \( [\p, \p]^\circ \) is the annihilator
  of the Lie algebra of the commutator subgroup of \( P \). The same is true if we replace the assumption that each  \( \beta_j \) is surjective with the
assumption that each  \( \alpha_j \) is injective.
  When 
 \( m_i =i \) for all \( i \)
  we have the space
  \begin{equation*}
    K_\C  \times_N \bmf
  \end{equation*}
  where \( N \) is a maximal unipotent subgroup of \( K_\C = \SL(n,\C) \) and
  \( \bmf = \n^\circ \) is a Borel subalgebra.
\end{proposition}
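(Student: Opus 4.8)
The plan is to produce mutually inverse algebraic maps, exploiting that the \( H_\C \)-action is \emph{free} on the locus where all the \( \beta_j \) are surjective. Freeness is immediate: if \( (g_i) \in H_\C \) fixes such a quiver then \( g_{n-1}\beta_{n-1} = \beta_{n-1} \) forces \( g_{n-1} = \Id \) on \( \im\beta_{n-1} = V_{n-1} \), and then \( g_{n-2}\beta_{n-2} = \beta_{n-2} \) forces \( g_{n-2} = \Id \), and so on down the quiver. Hence the quotient is a genuine geometric quotient (no GIT subtleties arise), and it suffices to match \( H_\C \)-orbits bijectively with the fibres of \( K_\C \times_{[P,P]} [\p,\p]^\circ \).

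First I would extract the flag. Surjectivity makes each composite \( \beta_i\beta_{i+1}\cdots\beta_{n-1}\colon \C^n \to V_i \) surjective, so its kernel has dimension \( n - m_i \); these kernels form a descending flag, which determines a point of \( K_\C/P \) and hence an element \( g \in K_\C = \SL(n,\C) \), well defined up to right multiplication by \( P \). Equivalently, after acting by \( g^{-1} \) and by suitable elements of \( H_\C \) one brings every \( \beta_j \) to a standard block projection. Because \( H_\C = \prod_{i=1}^{n-1}\SL(m_i,\C) \) and not \( \prod \GL(m_i,\C) \), a block-scaling on each \( V_i \) survives this normalisation; these residual scalings are precisely the central directions of the Levi, i.e.\ the torus \( P/[P,P] \), and it is they that refine the coset from \( K_\C/P \) to \( K_\C/[P,P] \).

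Next I would recover the rest of the quiver from the moment map. With the \( \beta_j \) standardised, the complex moment map equations \eqref{eq:mmcomplex}, \( \alpha_i\beta_i - \beta_{i+1}\alpha_{i+1} = \lambda^\C_{i+1} I \), determine each \( \alpha_i \) inductively downward in terms of \( X = \alpha_{n-1}\beta_{n-1} \) and the scalars \( \lambda^\C_\bullet \), as sketched in \S2. The key point to verify is that \( \operatorname{Ad}(g^{-1})X \) lands in \( [\p,\p]^\circ \): under the trace pairing the annihilator of \( [\p,\p] = [\lie{l},\lie{l}] \oplus \n_{\p} \) consists exactly of the matrices that are block triangular for \( P \) and have \emph{scalar} diagonal Levi blocks (that is, \( [\p,\p]^\circ = (\z(\lie{l})\cap\sln(n,\C)) \oplus \n_{\p} \)); the standardised \( \beta_j \) force the triangularity of \( X \), while the scalar right-hand sides \( \lambda^\C_{i+1} I \) force its diagonal blocks to be scalar. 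Thus \( (\alpha,\beta) \mapsto [\,g,\ \operatorname{Ad}(g^{-1})X\,] \) is a well-defined map into \( K_\C\times_{[P,P]}[\p,\p]^\circ \), and the reconstruction just described provides a two-sided inverse.

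The main obstacle is the bookkeeping that matches the residual gauge freedom exactly to \( [P,P] \). After fixing the kernel flag, the ambiguity in \( g \) is right multiplication by \( P \), and one must show that the stabiliser of the standardised \( \beta_j \) inside \( H_\C \), together with the defining relation \( [g,\xi] = [gh^{-1}, \operatorname{Ad}(h)\xi] \) of the associated bundle, cuts this down to precisely \( [P,P] \)-equivalence, neither more nor less. The delicate part is the torus \( P/[P,P] \): the block-scaling freedom absent from \( \SL \) and the scalar Levi blocks \( \z(\lie{l}) \) of \( [\p,\p]^\circ \) must be seen to account for exactly the same degrees of freedom. Once this is checked, freeness forces the two maps to be inverse isomorphisms of varieties. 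Finally, the statement with each \( \alpha_j \) injective follows by the transpose symmetry \( \alpha_i \leftrightarrow \beta_i \) of the quiver, and the full-flag case \( m_i = i \) is the specialisation \( P = B \), \( [P,P] = N \), \( [\bmf,\bmf]^\circ = \n^\circ = \bmf \).
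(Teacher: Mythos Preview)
Your approach is essentially the same as the paper's: standardise the surjections \( \beta_j \), identify the residual gauge freedom with \( [P,P] \), and then use the complex moment map equations to recover the \( \alpha_i \) inductively from \( X=\alpha_{n-1}\beta_{n-1}\in[\p,\p]^\circ \). The only real difference is packaging. The paper standardises in a single step using the full \( H_\C\times K_\C=\prod_{i=1}^{n}\SL(m_i,\C) \)-action to put each \( \beta_i \) in the form \( (0_{m_i\times k_i}\mid I_{m_i\times m_i}) \), and then reads off that the subgroup of \( K_\C \) preserving this form is exactly \( [P,P] \) (elements \( g_{i+1} \) block upper-triangular with \( g_i \) in the bottom-right). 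You instead split this into two moves: first extract the flag of kernels to get \( g \) modulo \( P \), and then argue separately that the \( \SL \)-versus-\( \GL \) discrepancy in \( H_\C \) supplies the missing torus \( P/[P,P] \). Both reach the same place, but the paper's one-step version sidesteps precisely the ``delicate bookkeeping'' you flag in your last paragraph, since the residual \( [P,P] \) appears directly as a stabiliser rather than being assembled from a \( P \)-coset plus a torus correction. Your freeness observation is correct and a nice addition not made explicit in the paper's account.
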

We can modify this result as follows to describe the subset of quivers for which 
each \( \alpha_j \) is injective and each \( \beta_j \) is surjective.

\begin{proposition} \label{corBQ}
Let \( 0=m_0 \leqslant m_1 \leqslant 
 \cdots \leqslant m_n = n \) and let \( V_i = \C^{m_i} \) for \( 0 \leqslant i \leqslant n \).
  Consider quivers 
of the form 
  \begin{equation*}
    0=V_0
    \stackrel[\beta_0]{\alpha_0}{\rightleftarrows}
    V_1
    \stackrel[\beta_1]{\alpha_1}{\rightleftarrows}
    V_2
    \stackrel[\beta_2]{\alpha_2}{\rightleftarrows}
    \dots
    \stackrel[\beta_{n-2}]{\alpha_{n-2}}{\rightleftarrows}
    V_{n-1}
    \stackrel[\beta_{n-1}]{\alpha_{n-1}}{\rightleftarrows}
    V_n = \C^n,  
  \end{equation*} 
  where 
each \( \alpha_j \) is injective and each \( \beta_j \) is surjective 
 and  the complex
  moment map equations for \( H=\prod_{i=1}^{n-1} \SU(m_i) \) are satisfied.
The set of such quivers 
  modulo the action of \( H_\C=\prod_{i=1}^{n-1} \SL(m_i, \C) \) may
  be identified with
  \begin{equation*}
    K_\C  \times_{[P,P]} [\p, \p]^\circ_* 
  \end{equation*}
  where  \( [\p, \p]^\circ_* \) is the open subset of  \( [\p, \p]^\circ \)
consisting of those  \( X \in [\p, \p]^\circ \) such that 
\begin{equation*}
     X_i - \frac{\mathrm{tr}X_{ii}}{k_i} \left( 
\begin{array}{c} 0_{k_i \times m_i} \\ I_{m_i \times m_i}
\end{array} \right)
  \end{equation*}
has maximal rank \( m_i \) for each \( i \) where \( k_i = m_{i+1} - m_i \).
Here \( X_i \) is the bottom right \( m_{i+1} \times m_i \) block in \( X \)
and \( X_{ii} \) is its \( i \)th diagonal block (of size \( k_i \times k_i \)).
\end{proposition}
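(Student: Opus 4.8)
The plan is to deduce the statement from Proposition~\ref{betasurj}, which already identifies the quivers with all \( \beta_j \) surjective, modulo \( H_\C \), with \( K_\C\times_{[P,P]}[\p,\p]^\circ \). The locus where in addition every \( \alpha_j \) is injective is an open condition, and it is invariant under both the residual \( K_\C=\SL(n,\C) \) action (which acts only at the \( \C^n=V_n \) end, by \( \alpha_{n-1}\mapsto g\alpha_{n-1} \), and so preserves injectivity of \( \alpha_{n-1} \) while leaving the other \( \alpha_j \) untouched) and the \( H_\C \) action (which acts by isomorphisms on each \( V_i \)). Hence it cuts out a \( [P,P] \)-invariant open subset \( [\p,\p]^\circ_*\subseteq[\p,\p]^\circ \) of the fibre, and it suffices to test injectivity on the slice over the base point --- that is, on the normal-form quiver attached to a given \( X\in[\p,\p]^\circ \). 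Recall from the proof of Proposition~\ref{betasurj} that this normal form standardises each \( \beta_j\colon V_{j+1}\to V_j \) as the projection \( (0\mid I_{m_j}) \) onto the last \( m_j \) coordinates, after which \eqref{eq:mmcomplex} reconstructs every remaining map from \( X \). Throughout I write \( X=\alpha_{n-1}\beta_{n-1} \); since the expression to be analysed is unchanged by adding a multiple of \( I_n \) to \( X \), trace normalisation is irrelevant.

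First I would record the block shape of this reconstruction. Writing \( \alpha_i=\left(\begin{smallmatrix}A_i\\B_i\end{smallmatrix}\right) \) with \( A_i \) of size \( k_i\times m_i \) and \( B_i \) of size \( m_i\times m_i \), where \( k_i=m_{i+1}-m_i \), the standardised \( \beta_i \) gives \( \alpha_i\beta_i=\left(\begin{smallmatrix}0&A_i\\0&B_i\end{smallmatrix}\right) \) and \( \beta_i\alpha_i=B_i \). Putting \( X_{(i)}=\alpha_i\beta_i \) on \( V_{i+1} \), the equation \eqref{eq:mmcomplex} reads \( X_{(i-1)}=\beta_i\alpha_i+\lambda_i^\C I=B_i+\lambda_i^\C I \), and a downward induction from \( X_{(n-1)}=X \) shows that \( X_{(i)} \) equals the bottom-right \( m_{i+1}\times m_{i+1} \) block of \( X \) plus a scalar \( c_i I \), with \( c_i=\sum_{j=i+1}^{n-1}\lambda_j^\C \). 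Because the first \( k_i \) columns of \( \alpha_i\beta_i \) vanish, the top-left \( k_i\times k_i \) corner of \( X_{(i)} \) is zero; comparing with \( X \) this forces the \( i \)th diagonal block \( X_{ii} \) (the \( k_i\times k_i \) corner in rows and columns \( n-m_{i+1}+1,\dots,n-m_i \)) to equal \( -c_i I_{k_i} \), whence \( c_i=-\tr(X_{ii})/k_i \). These are exactly the conditions --- scalar diagonal blocks and vanishing strictly-lower blocks --- describing membership in \( [\p,\p]^\circ \), so the computation is internally consistent.

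Finally I would read off \( \alpha_i \). The last \( m_i \) columns of \( X_{(i)}=\left(\begin{smallmatrix}0&A_i\\0&B_i\end{smallmatrix}\right) \) are precisely \( \alpha_i=\left(\begin{smallmatrix}A_i\\B_i\end{smallmatrix}\right) \); by the previous paragraph these are also the last \( m_i \) columns of the bottom-right \( m_{i+1}\times m_{i+1} \) block of \( X \) --- namely the block \( X_i \) --- shifted by \( c_i\left(\begin{smallmatrix}0\\I_{m_i}\end{smallmatrix}\right) \). Substituting \( c_i=-\tr(X_{ii})/k_i \) gives
\[
  \alpha_i = X_i - \frac{\tr X_{ii}}{k_i}\begin{pmatrix}0_{k_i\times m_i}\\ I_{m_i\times m_i}\end{pmatrix},
\]
so \( \alpha_i \) is injective, i.e.\ has maximal rank \( m_i \), exactly when the right-hand side does; imposing this for every \( i \) with \( k_i>0 \) (the steps with \( k_i=0 \) forcing \( \alpha_i \) to be an isomorphism, so that no condition is needed) identifies the injective-and-surjective locus with \( K_\C\times_{[P,P]}[\p,\p]^\circ_* \). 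I expect the only genuine work to be the index bookkeeping: fixing the block positions and the sign of \( c_i \), and verifying that the correction \( \tr(X_{ii})/k_i \) is what makes the formula independent of the trace of \( X \), so that it is well defined on \( [\p,\p]^\circ\subset\sln(n,\C) \).
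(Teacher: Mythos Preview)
Your proof is correct and follows essentially the same route as the paper's: standardise the \( \beta_i \) via Proposition~\ref{betasurj}, compute \( \alpha_i \) block by block from the complex moment map equations to obtain \( \alpha_i=X_i-\tfrac{\tr X_{ii}}{k_i}\bigl(\begin{smallmatrix}0\\I\end{smallmatrix}\bigr) \), and read off the rank condition; the paper appends an extra paragraph sweeping by \( T_\C \) to rewrite the answer as \( K_\C\times_P(T_\C[\p,\p]^\circ_*) \), which you omit but which is not required for the proposition as stated. (Your parenthetical that ``no condition is needed'' when \( k_i=0 \) is not quite right---injectivity of \( \alpha_i \) is still a nontrivial constraint there---but the paper does not treat this edge case either, and indeed the formula in the statement is already ill-defined for \( k_i=0 \).)
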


\begin{proof} We first recall from \cite{DKS} the proof of 
Proposition \ref{betasurj} above. Given a quiver of the form 
  \begin{equation*}
    0=V_0
    \stackrel[\beta_0]{\alpha_0}{\rightleftarrows}
    V_1
    \stackrel[\beta_1]{\alpha_1}{\rightleftarrows}
    V_2
    \stackrel[\beta_2]{\alpha_2}{\rightleftarrows}
    \dots
    \stackrel[\beta_{n-2}]{\alpha_{n-2}}{\rightleftarrows}
    V_{n-1}
    \stackrel[\beta_{n-1}]{\alpha_{n-1}}{\rightleftarrows}
    V_n = \C^n,  
  \end{equation*} 
  where each \( \beta_j \) is surjective and the complex moment map
  equations for \( H=\prod_{i=k+1}^{n-1} \SU(m_i) \) are satisfied, it
  follows from an easy inductive argument that the vector spaces \(
  V_i \) with \( k<i\leqslant n \) have bases so that
  \begin{equation*}
    \beta_i = \left( 0_{m_i \times k_i} \mid I_{m_i \times m_i} \right),
  \end{equation*}
  where \( m_i = \dim V_i \) and \( k_i = m_{i+1} - m_i \) is the
  dimension of the kernel of \( \beta_i \). We have thus used the action of \( H_\C \times K_\C
  = \prod_{i=1}^n \SL(m_i, \C) \) to put the maps \( \beta_i \) in standard form.
 
This standard form is preserved by transformations satisfying
  \begin{equation*}
    g_{i+1} =
    \begin{pmatrix}
      * & * \\
      0 & g_i
    \end{pmatrix}
    ,
  \end{equation*}
  where the top left block is \( k_i \times k_i \), the bottom
  right block is \( m_i \times m_i \) and \( g_{k+1} \) is an arbitrary
  element of \( \SL(m_{k+1}, \C) \). The freedom
  in \( \SL(n, \C) \) is therefore the commutator of the parabolic group \( P \)
  associated to the flag of dimensions \( (m_{k+1}, m_{k+2}, \dots, m_{n}=n)
  \) in \( \C^n \).

With respect to bases chosen as
above, the matrix of \( \alpha_i \beta_i \) for \( k<i<n \) is
\begin{equation}
  \label{abform}
  \begin{pmatrix}
    0_{k_i \times k_i} & D_{k_i \times m_i} \\
    0_{m_i \times k_i} & -\lambda_i^\C I_{m_i} + \alpha_{i-1}
    \beta_{i-1}
  \end{pmatrix}
\end{equation}
for some \( k_i \times m_i \) matrix \( D \).

We can use the pairing \( (A,B)
\mapsto \tr(AB) \) to identify \( \kf \) and \( \kf_\C \) with their
duals. 
It now follows inductively that  \( X = \alpha_{n-1} \beta_{n-1} \) lies in the
annihilator of the Lie algebra of the commutator \( [P,P] \) of the
parabolic determined by the integers \( k_j \), and the diagonal entries of \( X \) are \( 0
\) (\(k_{n-1} \) times), \( -\lambda_{n-1}^\C \) (\(k_{n-2} \)
times), \( \dots, -(\lambda_{n-1}^\C + \dots + \lambda_{i+1}^\C) \)
(\(k_i \) times),  \( \dots \)

Moreover one can show that 
any such \( X \) occurs for a solution of the complex moment map equations,
and that the trace-free part of \( X \) determines all
the \( \alpha_i \) and hence the entire quiver modulo the action
of \( H_\C \).
Note that 
 if \( X \in  [\p, \p]^\circ \) then the corresponding quiver 
  \begin{equation*}
    0=V_0
    \stackrel[\beta_0]{\alpha_0}{\rightleftarrows}
    V_1
    \stackrel[\beta_1]{\alpha_1}{\rightleftarrows}
    V_2
    \stackrel[\beta_2]{\alpha_2}{\rightleftarrows}
    \dots
    \stackrel[\beta_{n-2}]{\alpha_{n-2}}{\rightleftarrows}
    V_{n-1}
    \stackrel[\beta_{n-1}]{\alpha_{n-1}}{\rightleftarrows}
    V_n = \C^n,  
  \end{equation*} 
with each \( \beta_i \) in standardised form
  \begin{equation*}
    \beta_i = \left( 0_{n_i \times k_i} \mid I_{m_i \times m_i} \right),
  \end{equation*}
has 
  \begin{equation*}
    \alpha_i = X_i - \frac{\mathrm{tr}X_{ii}}{k_i} \left( 
\begin{array}{c} 0_{k_i \times m_i} \\ I_{m_i \times m_i}
\end{array} \right)
  \end{equation*}
where \( X_i \) is the bottom right \( m_{i+1} \times m_i \) block in \( X \)
and \( X_{ii} \) is its \( i \)th diagonal block (of size \( k_i \times k_i \)).

If we sweep out the space \( [\p,\p]_*^\circ \) of quivers of this form with
each \( \alpha_j \) injective by the
action of the torus \( T_\C = (\C^*)^{n-1} \) then we obtain the space \( T_\C [\p,\p]_*^\circ \)
of quivers for which \( X = \alpha_{n-1}\beta_{n-1} \in [\p,\p]_*^\circ \). Here
each \( \beta_k \) can be put in the form
\begin{equation*}
  \beta_i = \left( 0_{m_i \times k_i} \mid a_i I_{m_i \times m_i} \right),
\end{equation*}
for some nonzero scalar \( a_i \) by using the action of \( H_\C=\prod_{i=k+1}^{n-1} \SL(m_i, \C) \) but \emph{not} the action of \( K_\C = \SL(n,\C) \). Then the
set of all quivers 
of the form 
  \begin{equation*}
    0=V_0
    \stackrel[\beta_0]{\alpha_0}{\rightleftarrows}
    V_1
    \stackrel[\beta_1]{\alpha_1}{\rightleftarrows}
    V_2
    \stackrel[\beta_2]{\alpha_2}{\rightleftarrows}
    \dots
    \stackrel[\beta_{n-2}]{\alpha_{n-2}}{\rightleftarrows}
    V_{n-1}
    \stackrel[\beta_{n-1}]{\alpha_{n-1}}{\rightleftarrows}
    V_n = \C^n,  
  \end{equation*} 
  where 
each  \( \beta_j \) is surjective and each \( \alpha_j \) is injective 
 and  the complex
  moment map equations for \( H=\prod_{i=k+1}^{n-1} \SU(m_i) \) are satisfied,
  modulo the action of \( H_\C=\prod_{i=k+1}^{n-1} \SL(m_i, \C) \), is
identified with
  \begin{equation*}
    K_\C  \times_{[P,P]} [\p, \p]_*^\circ \cong K_\C \times_P (T_\C [\p,\p]_*^\circ)
  \end{equation*}
since \( P = T_\C [P,P] \).
This completes the proof.
\end{proof}
\begin{remark} \label{remJCF}
Let  
  \begin{equation*}
    0=V_0
    \stackrel[\beta_0]{\alpha_0}{\rightleftarrows}
    V_1
    \stackrel[\beta_1]{\alpha_1}{\rightleftarrows}
    V_2
    \stackrel[\beta_2]{\alpha_2}{\rightleftarrows}
    \dots
    \stackrel[\beta_{n-2}]{\alpha_{n-2}}{\rightleftarrows}
    V_{n-1}
    \stackrel[\beta_{n-1}]{\alpha_{n-1}}{\rightleftarrows}
    V_n = \C^n,  
  \end{equation*} 
be a quiver such that 
each \( \alpha_j \) is injective and each \( \beta_j \) is surjective 
 and  the complex
  moment map equations for \( H=\prod_{i=k+1}^{n-1} \SU(m_i) \) are satisfied. 
Then \( \ker 
  \alpha_j  \beta_j  = \ker   \beta_j \)
and   \( \mathrm{im} \,
  \alpha_j  \beta_j  = \mathrm{im} \,  \alpha_j \) for each \( j \),
and we can inductively choose coordinates on \( V_1, \ldots V_n \), starting
with \( V_n = \C^n \), so that each \( \alpha_{j}\beta_{j} \) (and hence each \( \beta_j \alpha_j \))
is in Jordan canonical form while each \( \beta_j \) is 
the direct sum over all the Jordan blocks of matrices in the standardised form
  \begin{equation*}
 \left( \begin{array}{cccccc}
0 & 1 & 0 & 0 & \cdots & 0\\
0 & 0 & 1 & 0 & \cdots & 0\\
 & & \cdots & & & \\
0 & \cdots & 0 &  0 & 1 & 0 \\
0 & \cdots & 0 & 0 & 0 & 1
  \end{array} \right).
  \end{equation*}
More precisely, we first choose a basis \( \{e_1,\ldots,e_n\} \) for \( V_n = \C^n \) so 
that \( \alpha_{n-1} \beta_{n-1} \) is in Jordan canonical form. Since
\( \beta_{n-1} :V_n \to V_{n-1} \) is surjective and \( \ker \beta_{n-1} =
\ker \alpha_{n-1} \beta_{n-1} \), it follows that
\begin{equation*}
\{ \beta_{n-1}(e_i): e_i \not\in \ker \beta_{n-1} \}
\end{equation*}
is a basis for \( V_{n-1} \) such that \( \beta_{n-1} \) is in the required form. It
is now easy to check that \( \beta_{n-1} \alpha_{n-1} \) is in Jordan canonical form,
and it follows immediately from the complex moment map equations that the same is true of \( \alpha_{n-2} \beta_{n-2} \), so that we can repeat the argument with
the basis 
\begin{equation*}
\{ \beta_{n-2}\beta_{n-1}(e_i): e_i \not\in \ker \beta_{n-2} \beta_{n-1} \}
\end{equation*}
 for \( V_{n-2} \).
Alternatively we can choose coordinates so that  each \( \alpha_{j}\beta_{j} \)
and \( \beta_j\alpha_j \)
is in Jordan canonical form while each \( \alpha_j \) is 
the direct sum over all the Jordan blocks of matrices in the standardised form
  \begin{equation*}
 \left( \begin{array}{ccccc}
1 & 0 & 0 & \cdots & 0\\
 0 & 1 & 0 & \cdots & 0\\
& & \cdots & &\\
0 & \cdots & 0  & 1 & 0 \\
0 & \cdots & 0  & 0 & 1 \\
0 & \cdots & 0  & 0 & 0
  \end{array} \right).
  \end{equation*}
Note that if we write elements \( \zeta \) of \( \lie{k}_\C \) in the form \( \zeta = \zeta_{\bf s} + \zeta_{\bf n} \) where \( \zeta_{\bf s} \) is semisimple and \( \zeta_{\bf n} \) is nilpotent and \( [\zeta_{\bf s},\zeta_{\bf n}]=0 \) (cf.\! \cite{Collingwood-McGovern} \S1.1), then 
\( (\alpha_{n-1}\beta_{n-1})_{\bf n} \in [\p, \p]_* \) and the
Jordan type of \( \alpha_{n-1}\beta_{n-1} \) determines, and is determined
by, the flag \(
  (m_{1}, \dots, m_n=n) \) (or equivalently the parabolic subgroup \( P \) of \( K_\C = \SL(n, \C) \) ) together with the nilpotent orbit containing \( (\alpha_{n-1}\beta_{n-1})_{\bf n} \).
Note also that the fact that \( \alpha_{n-1}\beta_{n-1} \) lies in the open subset
\( [\p, \p]_* \) of \( [\p,\p] \) tells us that  \( P \) is the Jacobson-Morozov parabolic
of \( (\alpha_{n-1}\beta_{n-1})_{\bf n} \) (cf.\! \cite{Collingwood-McGovern} \S3.8).
The choice of coordinates needed to put the quiver into the standard form above
amounts to using the action of the group 
\(  K_\C \times \prod_{j=1}^{n-1} \GL(m_j,\C) \) to standardise the quiver.
Equivalently, once we have quotiented by the action of  \( \prod_{j=1}^{n-1} \SL(m_j,\C) \), it amounts to using
 the action of \( K_\C \times (\C^*)^{n-1} = K_\C \times T_\C \) to 
standardise the point of \( Q \) represented by the quiver.
\end{remark}

\section{Standard forms for quivers} 
Let \( \sim \) be an equivalence relation on \( \{1, \ldots, n\} \), and let 
\( \mathcal{O}  \) be a nilpotent adjoint orbit 
 in \( (\lie{k}_\sim)_\C \). Recall that 
\( Q_{[\sim,\mathcal{O}]} \) is the subset \( Q_{(S,\delta,\sim)} \) of \( Q \) indexed by
the corresponding \( (S,\delta,\sim) \) as in Definition \ref{defnsimnil}. Similarly
we will write \( Q_{[\sim,\mathcal{O}]}^\circ \) for the open  subset  \( Q_{(S,\delta,\sim)}^\circ \) of \( Q_{(S,\delta,\sim)} \).
Putting the results of the last two sections together we find that given any quiver \( \q \) in \( Q_{[\sim,\mathcal{O}]}^\circ \) we can choose coordinates on \( \C, \C^2, \ldots, \C^n \) 
to put the quiver into standard form as
follows. Firstly, 
 as in Remark~\ref{lem5.14} 
we can decompose \( \q \) into  a direct sum of subquivers
  \begin{equation*} \cdots 
    V_i^j
    \stackrel[\beta_{i,j}]{\alpha_{i,j}}{\rightleftarrows}
    V_{i+1}^j \cdots 
  \end{equation*}
determined by the generalised eigenspaces 
of the compositions \( \alpha_i\beta_i \). 
Since \( \q \) lies in \( Q_{[\sim,\mathcal{O}]}^\circ  \) each such
subquiver is the  direct sum of a quiver \( \q^{[j]} \) of the form
\begin{equation}  \label{eeq6.8}
        0 \stackrel[\beta^{[j]}_0]{\alpha^{[j]}_0}{\rightleftarrows}
    \C^{m_1}\stackrel[\beta_1^{[j]}]{\alpha_1^{[j]}}{\rightleftarrows}
    \C^{m_2}\stackrel[\beta_2^{[j]}]{\alpha_2^{[j]}}{\rightleftarrows}\dots
    \stackrel[\beta_{n-2}^{[j]}]{\alpha_{n-2}^{[j]}}{\rightleftarrows}
    \C^{m_{n-1}}
    \stackrel[\beta_{n-1}^{[j]}]{\alpha_{n-1}^{[j]}}{\rightleftarrows}
    \C^{m_n} 
  \end{equation}
 where the maps 
\( \alpha_k^{[j]} \) for \( 1\leqslant k \leqslant n-1 \) are injective and the
maps \( \beta_k^{[j]} \)  for \( 1\leqslant k \leqslant n-1 \) are surjective, with quivers 
given 
for \( 1
  \leqslant h \leqslant p \) by
  \begin{equation*}
    \C^{d_h}
    \stackrel[\beta_{i_h}^{(h)}]{\alpha_{i_h}^{(h)}}{\rightleftarrows}
    \C^{d_h} \rightleftarrows \dots \rightleftarrows \C^{d_h}
    \stackrel[\beta_{j_h-2}^{(h)}]{\alpha_{j_h-2}^{(h)}}{\rightleftarrows}
    \C^{d_h}    
  \end{equation*}
  in the places \( i_h,i_h + 1, \dots ,j_h -1 \), where the
maps
\(
  \alpha_k^{(h)} \), \( \beta_k^{(h)} \), for \( i_h \leqslant k < j_h
  - 1 \), are multiplication by complex scalars 
such that \( \gamma_k^{(h)} = \alpha_k^{(h)} + j \beta_k^{(h)} \in \mathbb{H}\setminus \{0\} \) (cf. Remark \ref{sumdecomp}). Moreover the combinatorial data
here and the Jordan type of \( \alpha_{n-1}\beta_{n-1} \) for each summand (\ref{eeq6.8}) is determined by the pair \( (\sim,\mathcal{O}) \) (see Remarks
\ref{remequiv} and \ref{remJCF}). 
Now if we allow any complex linear changes of coordinates in
\( K_\C \times H_\C = \prod_{k=1}^n \SL(k,\C) \)  then 
using Remark \ref{remJCF} we can
put \( \alpha_{n-1}\beta_{n-1} \) into Jordan canonical form and then
 decompose the quiver (\ref{eeq6.8})
into a direct sum of quivers determined by the Jordan blocks of \( \alpha^{[j]}_{n-1}\beta^{[j]}_{n-1} \). Now
 \( \alpha^{[j]}_k \) is a direct sum over the set \( B_j \) of 
Jordan blocks for \( \alpha^{[j]}_{n-1}\beta^{[j]}_{n-1} \)
of matrices of the form
\begin{equation} \label{formstar}
 \left( \begin{array}{ccccc}
\nu_1^{bjk} & 0 & 0 & \cdots & 0\\
0 & \nu_2^{bjk} & 0 & \cdots & 0\\
 & & \cdots & & \\
0 & \cdots & 0 & 0 & \nu^{bjk}_{\ell_b - n+k}\\
0 & \cdots & 0 & 0 & 0 \end{array} \right) \end{equation}
for some \( \nu^{bjk}_i \in\C^* \) where \( \ell_b \) is the size
of the Jordan block \( b \in B_j \), and \( \beta_k^{[j]} \) is a corresponding direct sum over \( b \in B_j \) of matrices of the form
\begin{equation*}
\left( \begin{array}{cccccc}
\mu_1^{bjk} & \xi_1^{bjk} & 0 & 0 & \cdots & 0\\
0 & \mu_2^{bjk} & \xi_2^{bjk} & 0 & \cdots & 0\\
 & & \cdots & & & \\
0 & \cdots & 0 &  \mu^{bjk}_{\ell_b - n + k -1} & \xi^{bjk}_{\ell_b - n +k -1} & 0 \\
0 & \cdots & 0 & 0 & \mu^{bjk}_{\ell_b - n+k} & \xi^{bjk}_{\ell_b - n +k} 
  \end{array} \right)
\end{equation*}
for some \( \mu_i^{bjk}, \xi_i^{bjk} \in\C^* \) 
 satisfying the complex
moment map equations (\ref{eq:mmcomplex}). The resulting direct
sum over all the Jordan blocks \( \bigcup_{j}B_j \) 
for \( \alpha_{n-1}\beta_{n-1} \) has closed \( (H_S)_\C \)-orbit. 
Let \( Q_{[\sim,\mathcal{O}]}^{\circ,JCF} \) be the subset of \( Q_{[\sim,\mathcal{O}]}^{\circ} \) representing quivers of this form, 
where  \( \alpha_{n-1}\beta_{n-1} \) is in Jordan canonical form and
 the summands of the quiver corresponding to generalised eigenspaces of the compositions
\( \alpha_i \beta_i \) (and
thus by Remark \ref{remkappa} to equivalence classes for \( \sim \)) are ordered according to the
usual ordering on the minimal elements of the equivalence classes, and
the Jordan blocks for each equivalence class are ordered by size. Then
 we have
\begin{equation*}
Q_{[\sim,\mathcal{O}]}^\circ = K_\C Q_{[\sim,\mathcal{O}]}^{\circ,JCF}.
\end{equation*}
Note that if we allow any complex linear changes of coordinates in
\( K_\C \times \tilde{H}_\C = \prod_{k=1}^n \GL(k,\C) \) (or equivalently
allow the action of \( K_\C \times T_\C \) on \( Q_{[\sim,\mathcal{O}]} \)), then 
as in  Remark \ref{remJCF}
we can put our quiver into a more restricted form which is completely
determined by \( \alpha_{n-1}\beta_{n-1} \) and \( (\lambda_1^\C, \ldots, \lambda_{n-1}^\C) \),
and thus by the values of the
complex moment maps for the actions of \( K \) and \( T \) on \( Q \).
Hence 
the fibres of the complex moment map
\begin{equation*}
Q_{[\sim,\mathcal{O}]}^{\circ} \to \lie{k}_\C \oplus \lie{t}_\C
\end{equation*}
for the action of \( K \times T \) are contained in single \( K_\C \times T_\C \)-orbits.
 
\begin{remark}
 \label{remchoice}
Let us consider the stabiliser in \( K_\C \times T_\C \) of a quiver \( \q \in 
Q_{[\sim,\mathcal{O}]}^{\circ} \). We may assume that \( \q \) is in the standard form
described above, so that  \( \q \in 
Q_{[\sim,\mathcal{O}]}^{\circ,JCF} \). We also want to consider how much of the
\( K_\C \times T_\C \)-orbit of \( \q \) lies in \( Q_{[\sim,\mathcal{O}]}^{\circ,JCF} \).

We know from Remark~\ref{remkappa} that the decomposition of \( \q \) into  a direct sum of subquivers
 given by the generalised eigenspaces 
of the compositions \( \alpha_i\beta_i \) is determined by 
the equivalence relation \( \sim \), and since this decomposition is
canonical it follows that the stabiliser of \( \q \) in \( K_\C \times T_\C \)
is a subgroup of \( (K_\sim)_\C \times T_\C \), and indeed that elements of
\( K_\C \times T_\C \) which preserve the standard form all lie in
\( (K_\sim)_\C \times T_\C \). Now applying the proof of Proposition \ref{corBQ}
and Remark \ref{remJCF} to 
the summands in this decomposition, it follows that elements of \( K_\C \times T_\C \) which preserve the standard form are
contained in \( P \times T_\C \) where \( P \) is the parabolic subgroup of \( (K_\sim)_\C \)
which is the Jacobson--Morozov parabolic of the element of the nilpotent 
orbit \( \mathcal{O} \)
for  \( (K_\sim)_\C \) given by the nilpotent component of 
\begin{equation*}
    \alpha_{n-1} \beta_{n-1} - \frac1n \tr(\alpha_{n-1} \beta_{n-1}) I_n
\in (\lie{k}_\sim)_\C.
  \end{equation*}
Indeed,  the elements which preserve the standard form must lie in \( R_{[\sim,\mathcal{O}]} \times T_\C \) where
\( R_{[\sim,\mathcal{O}]} \) is the centraliser in \( P \) of this nilpotent element of
\( [\lie{p},\lie{p}] \). In particular the stabiliser of \( \q \) in \( K_\C \times T_\C \) is
contained in \( R_{[\sim,\mathcal{O}]} \times T_\C \).
Conversely, it is easy to see that both the centre
\( Z((K_\sim)_\C) \) of \( (K_\sim)_\C \), embedded diagonally in 
\( T_\C \times T_\C \), and the intersection
\( [P,P] \cap R_{[\sim,\mathcal{O}]} \), embedded in \( K_\C \times \{1\} \), stabilise the quiver \( \q \in Q_{[\sim,\mathcal{O}]}^{\circ,JCF} \).  This quiver is also stabilised by 
the complexification of the subgroup \( T_S \) defined at Definition 4.2.
Moreover since \( Q_{[\sim,\mathcal{O}]}^{\circ,JCF}  \) is 
\( R_{[\sim,\mathcal{O}]} \times T_\C \)-invariant it follows that
an element of \( K_\C \times T_\C \) preserves the standard form if and only if it
lies in  \( R_{[\sim,\mathcal{O}]} \times T_\C \), and so
\begin{equation} \label{identity} Q_{[\sim,\mathcal{O}]}^\circ
\cong  (K_\C \times T_\C) \times_{(R_{[\sim,\mathcal{O}]} \times T_\C) } Q_{[\sim,\mathcal{O}]}^{\circ,JCF}
 \cong  K_\C \times_{R_{[\sim,\mathcal{O}]}  } Q_{[\sim,\mathcal{O}]}^{\circ,JCF}. \end{equation}
Furthermore, to determine the stabiliser of \( \q \) in \( K_\C \times T_\C \) we
should first consider its intersection with \( (T_{[\sim,\mathcal{O}]})_\C \times T_\C \), where \( T_{[\sim,\mathcal{O}]} \) is
the intersection of \( T \) with \( R_{[\sim,\mathcal{O}]} \).
The intersection of the stabiliser of \( \q \) with 
\( (T_{[\sim,\mathcal{O}]})_\C \times T_\C \) contains 
the product of the subgroups 
\( Z((K_\sim)_\C) \), \( Z_P \cap  R_{[\sim,\mathcal{O}]} \)
and \( (T_S)_\C \) of \( (T_{[\sim,\mathcal{O}]})_\C \times T_\C \).
\end{remark}

\begin{lemma} \label{leminj}
The non-empty fibres of the restriction
\begin{equation*}
Q_{[\sim,\mathcal{O}]}^{\circ,JCF} \to \lie{k}_\C \oplus \lie{t}_\C
\end{equation*}
to \( Q_{[\sim,\mathcal{O}]}^{\circ,JCF}  \) of the complex moment map
for the action of \( K \times T \) on \( Q \) are single \( (T_{[\sim,\mathcal{O}]})_\C \times T_\C \)-orbits,  where \( T_{[\sim,\mathcal{O}]} \) is
the intersection of \( T \) with \( R_{[\sim,\mathcal{O}]} \).
\end{lemma}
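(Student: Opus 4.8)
The plan is to prove the two halves of the statement separately: first that \( (T_{[\sim,\mathcal{O}]})_\C \times T_\C \) genuinely acts on each non-empty fibre, and then that it acts transitively on it. For the first half, recall that the \( \lie{k}_\C \)-component of the complex moment map sends a quiver \( \q \) to the trace-free part \( \bar X = \alpha_{n-1}\beta_{n-1} - \frac1n \tr(\alpha_{n-1}\beta_{n-1}) I_n \) of \( X = \alpha_{n-1}\beta_{n-1} \), while the \( \lie{t}_\C \)-component records \( (\lambda_1^\C,\dots,\lambda_{n-1}^\C) \). Since \( X \) is invariant under \( \tilde H_\C \) and hence under \( T_\C = \tilde H_\C/H_\C \), and since the \( \lie{t}_\C \)-component is both \( T_\C \)-invariant and \( K_\C \)-invariant (the two actions commuting), both components are constant along \( T_\C \)-orbits; moreover \( T_{[\sim,\mathcal{O}]} = T \cap R_{[\sim,\mathcal{O}]} \subseteq R_{[\sim,\mathcal{O}]} \subseteq Z_{K_\C}(X) \), so \( (T_{[\sim,\mathcal{O}]})_\C \) fixes \( \bar X \) under conjugation and fixes the \( \lie{t}_\C \)-component as a subgroup of \( K_\C \). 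Thus \( (T_{[\sim,\mathcal{O}]})_\C \times T_\C \) preserves the value of the complex moment map, and as a subgroup of \( R_{[\sim,\mathcal{O}]} \times T_\C \) it preserves \( Q_{[\sim,\mathcal{O}]}^{\circ,JCF} \); hence it acts on each non-empty fibre.

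For transitivity I would use the completely normalised form of Remark~\ref{remJCF}. Given \( \q \in Q_{[\sim,\mathcal{O}]}^{\circ,JCF} \), allowing the action of \( K_\C \times T_\C \) (equivalently \( \prod_{k=1}^n \GL(k,\C) \) modulo \( H_\C \)) one passes from the Jordan form to the more restricted form completely determined by \( \bar X \) and \( (\lambda_1^\C,\dots,\lambda_{n-1}^\C) \), that is, by the value of the complex moment map. The point is that the only normalisation needed to pass from the Jordan form to this determined form is a rescaling of the individual Jordan-block strings of the quiver: on \( V_n = \C^n \) this rescaling is a block-scalar element of \( Z_{K_\C}(X_{\mathbf n}) \) lying in the maximal torus, which is precisely \( (T_{[\sim,\mathcal{O}]})_\C = (T \cap R_{[\sim,\mathcal{O}]})_\C \), while on the spaces \( V_k \) with \( k<n \) a single global scalar per space suffices and is supplied by \( T_\C \) (any per-block rescaling of the lower spaces already being absorbed into the \( H_\C \)-quotient defining \( Q \)). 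Hence each \( \q \) is moved by an element of \( (T_{[\sim,\mathcal{O}]})_\C \times T_\C \) to the determined representative \( \q_0 \) attached to its moment map value. Since \( \q_0 \) depends only on that value, any two quivers in \( Q_{[\sim,\mathcal{O}]}^{\circ,JCF} \) with the same image under the complex moment map are carried to the same \( \q_0 \), and so lie in a single \( (T_{[\sim,\mathcal{O}]})_\C \times T_\C \)-orbit. Together with the first half this gives the lemma.

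The hard part will be justifying that the reduction uses only the torus \( (T_{[\sim,\mathcal{O}]})_\C \) inside \( K_\C \) rather than all of \( R_{[\sim,\mathcal{O}]} \). Concretely, one must verify that \( T \cap R_{[\sim,\mathcal{O}]} \) really is the full block-rescaling torus and that the remaining, non-abelian directions of \( R_{[\sim,\mathcal{O}]} \)—its unipotent radical together with the block-mixing among Jordan blocks of equal size—act trivially on the point of \( Q \) represented by \( \q \), being absorbed by \( H_\C \); only then is the determined form reached within \( (T_{[\sim,\mathcal{O}]})_\C \times T_\C \). An equivalent route would combine the two facts already available: the fibres of the complex moment map on \( Q_{[\sim,\mathcal{O}]}^{\circ} \) lie in single \( K_\C \times T_\C \)-orbits (as observed just before Remark~\ref{remchoice}), and by Remark~\ref{remchoice} an element of \( K_\C \times T_\C \) carrying one Jordan-form quiver to another lies in \( R_{[\sim,\mathcal{O}]} \times T_\C \); these show each fibre of \( \mu|_{Q_{[\sim,\mathcal{O}]}^{\circ,JCF}} \) is a single \( R_{[\sim,\mathcal{O}]} \times T_\C \)-orbit, via the identification~\eqref{identity}, and the same reduction of \( R_{[\sim,\mathcal{O}]} \) to its maximal torus is then exactly the obstacle to overcome.
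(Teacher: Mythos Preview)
Your alternative route in the final paragraph is precisely the paper's proof, and the ``obstacle'' you single out is dispatched there in one line. The paper has already recorded, just before Corollary~\ref{corref}, the product decomposition
\begin{equation*}
R_{[\sim,\mathcal{O}]} \;=\; (T_{[\sim,\mathcal{O}]})_\C\,\bigl([P,P]\cap R_{[\sim,\mathcal{O}]}\bigr),
\end{equation*}
and Remark~\ref{remchoice} has established that the factor \( [P,P]\cap R_{[\sim,\mathcal{O}]} \) stabilises every point of \( Q_{[\sim,\mathcal{O}]}^{\circ,JCF} \). Combining these with \eqref{identity} and the observation (made just before Remark~\ref{remchoice}) that fibres on \( Q_{[\sim,\mathcal{O}]}^{\circ} \) lie in single \( K_\C\times T_\C \)-orbits, one sees immediately that each fibre on the JCF locus is a single \( R_{[\sim,\mathcal{O}]}\times T_\C \)-orbit and hence already a single \( (T_{[\sim,\mathcal{O}]})_\C\times T_\C \)-orbit. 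The first half of your argument, that \( (T_{[\sim,\mathcal{O}]})_\C\times T_\C \) preserves both the JCF locus and the moment-map value, matches the paper's concluding sentence.

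Your primary approach via the fully determined form of Remark~\ref{remJCF} is really the same fact unpacked in coordinates: the ``non-abelian directions'' of \( R_{[\sim,\mathcal{O}]} \) you worry about being absorbed by \( H_\C \) are exactly \( [P,P]\cap R_{[\sim,\mathcal{O}]} \), and the absorption you need is precisely the statement that this subgroup acts trivially on the \( H \)-orbit. So there is no further obstacle; you have all the ingredients, and the decomposition above is the clean way to assemble them.
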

\begin{proof}
We observed 
just before Remark \ref{remchoice} that the fibres of the complex moment map
\begin{equation*}
Q_{[\sim,\mathcal{O}]}^{\circ} \to \lie{k}_\C \oplus \lie{t}_\C
\end{equation*}
for the action of \( K \times T \) are contained in single \( K_\C \times T_\C \)-orbits.
 By (\ref{identity})
above, each \( K_\C \times T_\C \)-orbit which 
 meets \( Q_{[\sim,\mathcal{O}]}^{\circ,JCF} \) meets it in a single \( R_{[\sim,\mathcal{O}]} \times T_\C \)-orbit where
\begin{equation*}
R_{[\sim,\mathcal{O}]} = (T_{[\sim,\mathcal{O}]})_\C \,\,([P,P] \cap R_{[\sim,\mathcal{O}]})
\end{equation*}
and 
\( [P,P] \cap R_{[\sim,\mathcal{O}]} \) stabilises the quiver \( \q \in Q_{[\sim,\mathcal{O}]}^{\circ,JCF} \). Thus each fibre  of the restriction
to \( Q_{[\sim,\mathcal{O}]}^{\circ,JCF}  \) of the complex moment map
for the action of \( K \times T \) is contained in a single  \( (T_{[\sim,\mathcal{O}]})_\C \times T_\C \)-orbit. Since this complex moment map is
\( K_\C \times T_\C \)-equivariant and  \( (T_{[\sim,\mathcal{O}]})_\C \times T_\C \) fixes
the image in \(  \lie{k}_\C \oplus \lie{t}_\C \) of any element of \( Q_{[\sim,\mathcal{O}]}^{\circ,JCF} \), the result follows.
\end{proof}
\begin{remark}
\label{rem7.5a} Recall that the subgroup \( [P,P] \cap (T_{[\sim,\mathcal{O}]})_\C  \) acts trivially on \( Q_{[\sim,\mathcal{O}]}^{\circ,JCF} \). In fact 
\(  (T_{[\sim,\mathcal{O}]})_\C/[P,P] \cap  (T_{[\sim,\mathcal{O}]})_\C \) acts freely on  \( Q_{[\sim,\mathcal{O}]}^{\circ,JCF} \). To see this, consider 
a quiver which is a direct sum over the set \( \bigcup_{j}B_j \) of Jordan blocks for
\( \alpha_{n-1}\beta_{n-1} \) of quivers of the form described in 
(\ref{formstar}), together with quivers given
for \( 1
  \leqslant h \leqslant p \) by
  \begin{equation*}
    \C^{d_h}
    \stackrel[\beta_{i_h}^{(h)}]{\alpha_{i_h}^{(h)}}{\rightleftarrows}
    \C^{d_h} \rightleftarrows \dots \rightleftarrows \C^{d_h}
    \stackrel[\beta_{j_h-2}^{(h)}]{\alpha_{j_h-2}^{(h)}}{\rightleftarrows}
    \C^{d_h}    
  \end{equation*}
  in the places \( i_h,i_h + 1, \dots ,j_h -1 \), as in Remark \ref{sumdecomp}. 
The centraliser in \( T_\C \) of \( \alpha_{n-1}\beta_{n-1} \) consists of matrices in \( T_\C \) which are themselves direct sums over all the Jordan blocks of diagonal 
matrices with diagonal entries \( (\tau^{bjk}_1, \ldots , \tau^{bjk}_1,\tau^{bjk}_2) \) for some \( \tau^{bjk}_1,\tau^{bjk}_2 \in \C^* \). If
such a matrix sends the quiver to an element of its \( H \)-orbit, then \( \tau^{bjk}_1 = \tau^{bjk}_2 \) for all \( b,j,k \), and so the matrix lies in
\( [P,P] \cap (T_{[\sim,\mathcal{O}]})_\C  \).
\end{remark}
\begin{remark} \label{remsigma2}
The image  in
\(  \lie{k}_\C \) of \( \q \in Q_{[\sim,\mathcal{O}]}^{\circ} \) under the complex moment map for \( K \)
is the sum of an element \( \zeta \) of \( \lie{t}_\C \) with centraliser \( K_\sim \) in \( K \) and an element \( \xi \) of the nilpotent orbit \( \mathcal{O} \) in \( (\lie{k}_\sim)_\C \), and its image in \(  \lie{t}_\C \) under
the complex moment map for \( T \)
is equal to \( \zeta \) by Remark \ref{remkappa}. 

Hence the image of 
\( Q_{[\sim,\mathcal{O}]}^{\circ,JCF} \)  under the complex
moment map for \( K \times T \) is
\begin{equation*}
  \Delta (\lie{t}_\C)_\sim \oplus \xi_0 = 
\{ (\zeta + \xi_0, \zeta) \in \lie{k}_\C \oplus \lie{t}_\C : \zeta \in 
(\lie{t}_\C)_\sim \},
\end{equation*}
where \( (\lie{t}_\C)_\sim \) consists of the elements of \(
\lie{t}_\C \) with centraliser \( K_\sim \)
 in \( K \).
Also \( \xi_0 \in \mathcal{O} \) is the element of the nilpotent orbit
\( \mathcal{O} \) in Jordan canonical form with the  Jordan blocks 
within each generalised eigenspace for \( \alpha_{n-1}\beta_{n-1} \) ordered by size
and the generalised eigenspaces themselves ordered
using the equivalence relation \( \sim \). 

The image of \( Q_{[\sim,\mathcal{O}]}^{\circ} \)  under this
moment map is the \( K_{\C} \) sweep of 
\( \Delta (\lie{t}_\C)_\sim \oplus \xi_0 \), where \( K_{\C} \) acts trivially
on the second component.

\end{remark}

\begin{remark} \label{remqt}
Notice that if a quiver is of 
 the form
\begin{equation*}
\alpha_k^{bj} =
 \left( \begin{array}{ccccc}
\nu_1^{bjk} & 0 & 0 & \cdots & 0\\
0 & \nu_2^{bjk} & 0 & \cdots & 0\\
 & & \cdots & & \\
0 & \cdots & 0 & 0 & \nu^{bjk}_{\ell_b - n+k}\\
0 & \cdots & 0 & 0 & 0 \end{array} \right) \end{equation*}
and 
\begin{equation*}
\beta_k^{bj}=\left( \begin{array}{cccccc}
\mu^{bjk}_1 & \xi^{bjk}_1 & 0 & 0 & \cdots & 0\\
0 & \mu^{bjk}_2 & \xi^{bjk}_2 & 0 & \cdots & 0\\
 & & \cdots & & & \\
0 & \cdots & 0 &  \mu^{bjk}_{\ell_b - n + k -1} & \xi^{bjk}_{\ell_b - n + k -1} & 0 \\
0 & \cdots & 0 & 0 & \mu^{bjk}_{\ell_b - n + k} & \xi^{bjk}_{\ell-n+k} 
  \end{array} \right)
\end{equation*}
for some \( \nu_i^{bjk}, \mu_i^{bjk}, \xi_i^{bjk} \in\C^* \) as at
(\ref{formstar}) above,
then \( \alpha^{bj}_k\beta^{bj}_k \) and \( \beta^{bj}_k \alpha^{bj}_k \) are upper triangular matrices with
diagonal entries
\begin{equation*}
\mu_1^{bjk}\nu_1^{bjk}, \ldots, \mu^{bjk}_{\ell_b - n + k}\nu_{\ell_b - n + k}^{bjk}, 0
\end{equation*}
and 
\begin{equation*}
\mu_1^{bjk}\nu_1^{bjk}, \ldots, \mu^{bjk}_{\ell_b - n + k}\nu_{\ell_b - n + k}^{bjk}
\end{equation*}
respectively. It follows that the complex moment map equations are also
satisfied by the quiver given by replacing \( \alpha_k^{bj} \) and \( \beta_k^{bj} \) with
\begin{equation} \label{diagquiv}
 \alpha_k^{bj,T} = \left( \begin{array}{ccccc}
\nu_1^{bjk} & 0 & 0 & \cdots & 0\\
0 & \nu_2^{bjk} & 0 & \cdots & 0\\
 & & \cdots & & \\
0 & \cdots & 0 & 0 & \nu_{\ell_b - n + k}^{bjk}\\
0 & \cdots & 0 & 0 & 0 \end{array} \right) \end{equation}
and
\begin{equation*}
\beta_k^{bj,T} = \left( \begin{array}{cccccc}
\mu_1^{bjk} & 0 & 0 &  & \cdots & 0\\
0 & \mu_2^{bjk} & 0 & 0 & \cdots & 0\\
 & & \cdots & & &  \\
0 & \cdots & 0 & 0 & \mu_{\ell_b - n + k}^{bjk} & 0  \end{array} \right).
\end{equation*}
Similarly if \( \q \) is any quiver representing a point in \( Q_{[\sim,\mathcal{O}]}^{\circ,JCF} \)
whose Jordan blocks are of the form given by \( \alpha_k \) and \( \beta_k \) as
above, then the quiver \( \q^T \) obtained  from \( \q \) by replacing each such
Jordan block with the quiver given by \( \alpha_k^T \) and \( \beta_k^T \) 
satisfies the complex moment map equations for the action of \( H \), or
equivalently the complex moment map equations for the maximal
torus \( T_H \) of \( H \). 
\end{remark}

Recall from Definition~\ref{defhypertoric}
 the definition of \( M_T \) and \( \iota:M_T \hkq T_H \to Q \) inducing an
identification of the open subset \( Q_T^{\hks} = M_T^{\hks} \hkq T_H \)
of the hypertoric variety \( M_T \hkq T_H \) with its image in \( Q \). 
\begin{definition} \label{stratumhypertoric}
For any \( (\sim,\mathcal{O}) \) we can 
consider an open subset of a hyperk\"ahler modification
\begin{equation*}
((M_{[\sim,\mathcal{O}],T}\hkq T_H \times \mathbb{H} 
 ^\ell) \hkq (S^1)^\ell
\end{equation*}
(cf. Definition~\ref{hkmodification}) of the
hyperk\"ahler quotient \( M_{[\sim,\mathcal{O}],T}\hkq T_H 
 \) of 
the space \( M_{[\sim,\mathcal{O}],T} \)  of quivers which are direct sums of 
quivers of the form
\begin{equation} \label{diagquiv2}
 \alpha_k^T = \left( \begin{array}{ccccc}
\nu_1^k & 0 & 0 & \cdots & 0\\
0 & \nu_2^k & 0 & \cdots & 0\\
 & & \cdots & & \\
0 & \cdots & 0 & 0 & \nu_k^k\\
0 & \cdots & 0 & 0 & 0 \end{array} \right) \end{equation}
and
\begin{equation*}
\beta_k^T = \left( \begin{array}{cccccc}
\mu_1^k & 0 & 0 &  & \cdots & 0\\
0 & \mu_2^k & 0 & 0 & \cdots & 0\\
 & & \cdots & & &  \\
0 & \cdots & 0 & 0 & \mu_k^k & 0  \end{array} \right),
\end{equation*}
for \( \nu_i^k,\mu_i^k \in \C \),
with one such summand for every Jordan block of the canonical representative
\( \xi_0 \) of the nilpotent orbit \( \mathcal{O} \) in \( (\lie{k}_\sim)_\C \) as
in Remark \ref{remqt}. If
\( M_{[\sim,\mathcal{O}],T}^\circ  \) is the open subset of
\( M_{[\sim,\mathcal{O}],T} \) where all \( \nu_i^k \) and \( \mu_i^k \) are nonzero, then let 
\begin{equation*}
Q_{[\sim,\mathcal{O}],T}^\circ  = 
((M_{[\sim,\mathcal{O}],T}^\circ \hkq T_H \times \mathbb{H} \setminus \{ 0\})
 ^\ell) \hkq (S^1)^\ell.
\end{equation*}
Equivalently \( Q_{[\sim,\mathcal{O}],T}^\circ  \) can be identified with an open subset of the hyperk\"ahler quotient by  \( T_{{H}} \) of the space of quivers which, like
\( \q^T \) in Remark~\ref{remqt}, are direct sums of quivers of the
form above and summands
as in Remark \ref{remhypertoricstrat}.

The space \( M_{[\sim,\mathcal{O}],T} \)
 is a flat hypertoric variety
 with respect to the action of 
a quotient of \( T_{\tilde{H}} \). 
  Thus
\begin{equation*}
((M_{[\sim,\mathcal{O}],T}\hkq T_H) \times \mathbb{H} ^\ell) \hkq (S^1)^\ell
\end{equation*}
is also hypertoric, for the action of a quotient of \( T = T_{\tilde{H}}/T_H \), 
and it contains \( Q_{[\sim,\mathcal{O}],T}^\circ  \) as an open subset. 

We define \( Q_{[\sim,\mathcal{O}],T}  \) to be the open subset \( \SU(2) Q_{[\sim,\mathcal{O}],T}^\circ  \) of the hypertoric variety 
\(  
((M_{[\sim,\mathcal{O}],T}\hkq T_H \times \mathbb{H} 
 ^\ell) \hkq (S^1)^\ell.  \)
\end{definition}
\begin{definition}
Let \( \psi: Q_{[\sim,\mathcal{O}]}^{\circ,JCF} \to  Q_{[\sim,\mathcal{O}],T}^\circ   \)
be the map which associates to a quiver \( \q \in Q_{[\sim,\mathcal{O}]}^{\circ,JCF} \) the quiver \( \q^T \) described in Remark~\ref{remqt}.
Note that \( \psi \) is well defined, since any quiver which has the same form
as that for \( \q \) described in Remarks \ref{remsigma2} and \ref{remqt} and
which represents the same point in \( Q_{[\sim,\mathcal{O}]}^{\circ,JCF} \) (that is, lies in the same \( H \)-orbit) actually lies in the same \( T_H \)-orbit.
\end{definition}
\begin{lemma}
\( \psi: Q_{[\sim,\mathcal{O}]}^{\circ,JCF} \to  Q_{[\sim,\mathcal{O}],T}^\circ   \)
is a bijection.
\end{lemma}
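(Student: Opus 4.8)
\section*{Proof proposal}

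The plan is to present $\psi$ as an equivariant morphism that is compatible with the complex moment maps on the two sides, and then to read off the bijection fibrewise from the fact that on each side the non-empty fibres of the moment map are single torus orbits. By Remark~\ref{remqt} the passage $\q\mapsto\q^T$ leaves unchanged the diagonal entries of every composition $\alpha_k\beta_k$ and $\beta_k\alpha_k$; hence it preserves the scalars $\lambda_1^\C,\dots,\lambda_{n-1}^\C$ and so the value $\zeta\in(\lie{t}_\C)_\sim$ of the complex moment map for the residual $T$-action. Thus $\psi$ is compatible with the $T$-moment maps on $Q_{[\sim,\mathcal{O}]}^{\circ,JCF}$ and on the hypertoric $Q_{[\sim,\mathcal{O}],T}^\circ$, carrying the fibre over $\zeta$ into the fibre over $\zeta$. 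Moreover $\psi$ is manifestly equivariant for the diagonal action of $(T_{[\sim,\mathcal{O}]})_\C\times T_\C$, since rescaling diagonal entries commutes with deleting the super-diagonal entries.

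I would first treat surjectivity by reconstructing the deleted data. Given a point of $Q_{[\sim,\mathcal{O}],T}^\circ$, represented by a direct sum of diagonal blocks \eqref{diagquiv2} with all $\nu_i^k,\mu_i^k\in\C^*$, the off-diagonal part of the complex moment map equations \eqref{eq:mmcomplex} becomes, block by block, a triangular recursion for the super-diagonal entries $\xi$, whose top term is fixed by the requirement that $\alpha_{n-1}\beta_{n-1}$ acquire the canonical nilpotent part $\xi_0$ of $\mathcal{O}$ (Remark~\ref{remsigma2}). Because every $\nu_i^k$ is non-zero this recursion has a unique, and again non-zero, solution, so it produces a quiver of the shape described in Remarks~\ref{remsigma2} and~\ref{remqt}. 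One then checks that this quiver is hyperk\"ahler stable (all $\alpha$ injective, all $\beta$ surjective), lies in $Q_{[\sim,\mathcal{O}]}^{\circ,JCF}$ with the prescribed Jordan type and ordering conventions, and is sent to the given point by $\psi$.

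For injectivity I would argue fibrewise. If $\psi(\q_1)=\psi(\q_2)$ then $\q_1$ and $\q_2$ share the same $(\nu,\mu)$-data up to $T_H$, hence the same $\lambda^\C$, hence the same $\zeta$; since $\xi_0$ is fixed, Remark~\ref{remsigma2} shows they have equal image under the complex moment map for $K\times T$, and Lemma~\ref{leminj} then places them in a single $(T_{[\sim,\mathcal{O}]})_\C\times T_\C$-orbit. By equivariance it remains to verify that an element of this torus fixing $\psi(\q_2)$ already fixes $\q_2$, i.e.\ that the stabilisers match. This follows from the freeness recorded in Remark~\ref{rem7.5a} together with the stabiliser computation of Remark~\ref{remchoice}: the super-diagonal entries are rational functions of the diagonal ones through the recursion above, so are rescaled trivially precisely when the diagonal entries are.

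The step I expect to be the main obstacle is the bookkeeping at the level of the quotients rather than representatives. One must check that $\psi$ descends correctly through the hyperk\"ahler modification by $(S^1)^\ell$ and the quotient by $T_H$ that define $Q_{[\sim,\mathcal{O}],T}^\circ$ in Definition~\ref{stratumhypertoric}, and that the subtorus $T_S$ of Definition~\ref{defHs} acts compatibly on both sides. Concretely, the delicate point is to confirm that the torus $(T_{[\sim,\mathcal{O}]})_\C\times T_\C$ acting on $Q_{[\sim,\mathcal{O}]}^{\circ,JCF}$ and the torus governing the hypertoric variety $Q_{[\sim,\mathcal{O}],T}^\circ$ have matching stabilisers along each moment map fibre, so that the fibrewise map of single orbits is a bijection; granting this, the global bijection of $\psi$ follows.
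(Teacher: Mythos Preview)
Your surjectivity argument is essentially the paper's entire proof. The paper simply observes that, because $\q$ satisfies the complex moment map equations \eqref{eq:mmcomplex} and $\alpha_{n-1}\beta_{n-1}$ is in Jordan canonical form, the super-diagonal entries $\xi_i$ are uniquely determined by the diagonal entries $\nu_i,\mu_i$. That is exactly your triangular recursion, and the word ``uniquely'' does all the work: existence of the solution gives surjectivity, uniqueness gives injectivity.

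Your separate injectivity argument via Lemma~\ref{leminj} and stabiliser matching is therefore a detour. It is not wrong, but it is where all the difficulties you flag in your final paragraph come from: the quotient bookkeeping, the compatibility of the $(S^1)^\ell$ modification, the matching of torus stabilisers on the two sides. None of this is needed. Once you have said that the recursion has a \emph{unique} solution, you have already shown that $\q\mapsto\q^T$ is injective at the level of representatives in standard form, and since the well-definedness of $\psi$ (noted in its definition just above the lemma) already handles passage to $T_H$-orbits, you are done. The moral is that the direct algebraic inverse you built for surjectivity is a two-sided inverse; there is no need to invoke the moment-map fibre structure or Remark~\ref{rem7.5a} at all.
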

\begin{proof}
Since \( \q \in Q_{[\sim,\mathcal{O}]}^{\circ,JCF}  \) satisfies the complex moment map equations for \( H \) and \( \alpha_{n-1} \beta_{n-1} \) is in Jordan canonical form, the entries \( \xi_i \in\C^* \) of the Jordan blocks of \( \q \) are uniquely
determined by the entries \( \nu_i, \mu_i \in\C^* \) of the corresponding blocks
of \( \q^T \).
\end{proof}

Recall from Remark~\ref{remchoice} that \( R_{[\sim,\mathcal{O}]} \) is the centraliser 
of the canonical representative \( \xi_0 \) of the nilpotent orbit \( \mathcal{O} \) in
 its Jacobson--Morozov parabolic \( P \) in \( (K_\sim)_\C \), while
\( [P,P] \cap R_{[\sim,\mathcal{O}]} \) stabilises each point of \(  Q_{[\sim,\mathcal{O}]}^{\circ,JCF} \). 
Thus \( P = Z_P [P,P] \) and  \( R_{[\sim,\mathcal{O}]} = (T_{[\sim,\mathcal{O}]})_\C ([P,P] \cap R_{[\sim,\mathcal{O}]}) \) where 
\( Z_P \) is the centre of the standard Levi subgroup of \( P \) and
\( (T_{[\sim,\mathcal{O}]})_\C = Z_P \cap  R_{[\sim,\mathcal{O}]}  \). 
Note also that \( R_{[\sim,\mathcal{O}]} \) contains the centre \( Z((K_\sim)_\C) \)
of \( (K_\sim)_\C \), which acts trivially on
\( Q_{[\sim,\mathcal{O}]}^\circ \) and on \( Q_{[\sim,\mathcal{O}],T}^\circ \),
 and that \( (K_\sim)_\C = Z((K_\sim)_\C)[(K_\sim)_\C,(K_\sim)_\C] \).
We obtain an immediate corollary as follows.

\begin{corollary} \label{corref}
\begin{equation*}
Q_{[\sim,\mathcal{O}]}^\circ \cong  K_\C \times_{R_{[\sim,\mathcal{O}]}  }
 Q_{[\sim,\mathcal{O}],T}^\circ  
=  (K_\C/[P,P] \cap  R_{[\sim,\mathcal{O}]}) \times_{ (T_{[\sim,\mathcal{O}]})_\C 
 }
 Q_{[\sim,\mathcal{O}],T}^\circ
\end{equation*}
\begin{equation*}
\cong K_\C \times_{(K_\sim)_\C} \left( ([(K_\sim)_\C,(K_\sim)_\C]/[P,P] \cap  R_{[\sim,\mathcal{O}]})  
\times_{(T^*_{[\sim,\mathcal{O}]})_\C} 
Q_{[\sim,\mathcal{O}],T}^\circ 
\right)
\end{equation*}
where  \(  Q_{[\sim,\mathcal{O}],T}^\circ  \) is an open subset of a hypertoric variety and \( (T^*_{[\sim,\mathcal{O}]})_\C
=(T_{[\sim,\mathcal{O}]})_\C \cap [(K_\sim)_\C,(K_\sim)_\C] \).
\end{corollary}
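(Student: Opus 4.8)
The plan is to read off all three identifications from the structural results already in place, treating the corollary as a chain of standard manipulations of associated fibre bundles. The starting point is the isomorphism \eqref{identity} of Remark~\ref{remchoice},
\begin{equation*}
Q_{[\sim,\mathcal{O}]}^\circ \cong K_\C \times_{R_{[\sim,\mathcal{O}]}} Q_{[\sim,\mathcal{O}]}^{\circ,JCF},
\end{equation*}
combined with the bijection \( \psi\colon Q_{[\sim,\mathcal{O}]}^{\circ,JCF} \to Q_{[\sim,\mathcal{O}],T}^\circ \) of the preceding Lemma. The first thing I would verify is that \( \psi \) is equivariant for the action of \( R_{[\sim,\mathcal{O}]} \). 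Since \( [P,P]\cap R_{[\sim,\mathcal{O}]} \) acts trivially on \( Q_{[\sim,\mathcal{O}]}^{\circ,JCF} \) (Remark~\ref{remchoice}), this action factors through \( (T_{[\sim,\mathcal{O}]})_\C \), and comparing the explicit diagonal shapes \eqref{formstar} and \eqref{diagquiv} shows that sending \( \q \) to \( \q^T \) intertwines this torus action with the corresponding hypertoric torus action on \( Q_{[\sim,\mathcal{O}],T}^\circ \). Hence \( \psi \) lifts to an \( R_{[\sim,\mathcal{O}]} \)-equivariant isomorphism of fibres, and replacing the fibre in the associated bundle yields the first displayed isomorphism \( Q_{[\sim,\mathcal{O}]}^\circ \cong K_\C \times_{R_{[\sim,\mathcal{O}]}} Q_{[\sim,\mathcal{O}],T}^\circ \).

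For the middle equality I would invoke the decomposition \( R_{[\sim,\mathcal{O}]} = (T_{[\sim,\mathcal{O}]})_\C([P,P]\cap R_{[\sim,\mathcal{O}]}) \) recorded just before the statement. As \( (T_{[\sim,\mathcal{O}]})_\C = Z_P \cap R_{[\sim,\mathcal{O}]} \) is central in \( R_{[\sim,\mathcal{O}]} \), the subgroup \( [P,P]\cap R_{[\sim,\mathcal{O}]} \) is normal, and it acts trivially on \( Q_{[\sim,\mathcal{O}],T}^\circ \). Factoring the associated bundle by this trivially acting normal subgroup, which acts by right multiplication on \( K_\C \), is the standard identity \( K_\C \times_{R_{[\sim,\mathcal{O}]}} F \cong (K_\C/[P,P]\cap R_{[\sim,\mathcal{O}]}) \times_{(T_{[\sim,\mathcal{O}]})_\C} F \); here the residual structure group \( R_{[\sim,\mathcal{O}]}/([P,P]\cap R_{[\sim,\mathcal{O}]}) \cong (T_{[\sim,\mathcal{O}]})_\C/((T_{[\sim,\mathcal{O}]})_\C\cap[P,P]) \) is written simply as \( (T_{[\sim,\mathcal{O}]})_\C \) because its kernel \( (T_{[\sim,\mathcal{O}]})_\C\cap[P,P] \) acts trivially by Remark~\ref{rem7.5a}.

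For the final isomorphism I would apply induction in stages along \( R_{[\sim,\mathcal{O}]} \subseteq (K_\sim)_\C \subseteq K_\C \), writing
\begin{equation*}
K_\C \times_{R_{[\sim,\mathcal{O}]}} Q_{[\sim,\mathcal{O}],T}^\circ \cong K_\C \times_{(K_\sim)_\C}\bigl((K_\sim)_\C \times_{R_{[\sim,\mathcal{O}]}} Q_{[\sim,\mathcal{O}],T}^\circ\bigr).
\end{equation*}
The inner bundle I would simplify using \( (K_\sim)_\C = Z((K_\sim)_\C)[(K_\sim)_\C,(K_\sim)_\C] \), where \( Z((K_\sim)_\C)\subseteq R_{[\sim,\mathcal{O}]} \) acts trivially on the fibre: a direct comparison of equivalence relations gives \( (K_\sim)_\C \times_{R_{[\sim,\mathcal{O}]}} F \cong [(K_\sim)_\C,(K_\sim)_\C]\times_{R_{[\sim,\mathcal{O}]}\cap[(K_\sim)_\C,(K_\sim)_\C]} F \), and one checks that \( R_{[\sim,\mathcal{O}]}\cap[(K_\sim)_\C,(K_\sim)_\C] = (T^*_{[\sim,\mathcal{O}]})_\C([P,P]\cap R_{[\sim,\mathcal{O}]}) \) with \( (T^*_{[\sim,\mathcal{O}]})_\C = (T_{[\sim,\mathcal{O}]})_\C\cap[(K_\sim)_\C,(K_\sim)_\C] \). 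Discarding the trivially acting \( [P,P]\cap R_{[\sim,\mathcal{O}]} \) exactly as in the previous paragraph turns the inner bundle into \( ([(K_\sim)_\C,(K_\sim)_\C]/[P,P]\cap R_{[\sim,\mathcal{O}]})\times_{(T^*_{[\sim,\mathcal{O}]})_\C} Q_{[\sim,\mathcal{O}],T}^\circ \), which is the bracketed term of the corollary.

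I expect the only step carrying genuine content, as opposed to formal bundle bookkeeping, to be the equivariance of \( \psi \) used in the first paragraph: one must confirm at the level of explicit matrices that replacing each Jordan block of \( \q \) by its diagonalised counterpart in \( \q^T \) commutes with the surviving \( (T_{[\sim,\mathcal{O}]})_\C \)-action and matches it with the hypertoric torus action on \( Q_{[\sim,\mathcal{O}],T}^\circ \). Everything else, namely the normality of \( [P,P]\cap R_{[\sim,\mathcal{O}]} \), induction in stages, and the removal of trivially acting subgroups, is a standard property of associated bundles, and the group-theoretic decompositions of \( R_{[\sim,\mathcal{O}]} \) and \( (K_\sim)_\C \) are already established immediately before the statement.
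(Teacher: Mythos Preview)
Your proposal is correct and follows essentially the same approach as the paper: the paper declares the corollary ``immediate'' from the bijection \( \psi \) of the preceding lemma together with the group-theoretic decompositions \( R_{[\sim,\mathcal{O}]} = (T_{[\sim,\mathcal{O}]})_\C([P,P]\cap R_{[\sim,\mathcal{O}]}) \) and \( (K_\sim)_\C = Z((K_\sim)_\C)[(K_\sim)_\C,(K_\sim)_\C] \) recorded just before the statement, and you have simply spelled out the associated-bundle bookkeeping that makes this immediacy explicit. Your identification of the equivariance of \( \psi \) as the one step with content is apt, and the rest is exactly the standard manipulation the paper has in mind.
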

\begin{remark} \label{remref}
The quotient of \( K_\C/[P,P] \cap  R_{[\sim,\mathcal{O}]} \) by \( (T_{[\sim,\mathcal{O}]})_\C \) is of course the nilpotent orbit
\( K_\C/R_{[\sim,\mathcal{O}]} \) in \( \lie{k}_\C \) which contains
the nilpotent orbit
\begin{equation*}
\mathcal{O}= (K_\sim)_\C/R_{[\sim,\mathcal{O}]}
= [(K_\sim)_\C,(K_\sim)_\C]/R_{[\sim,\mathcal{O}]}\cap [(K_\sim)_\C,(K_\sim)_\C]
\end{equation*}
in \( (\lie{k}_\sim)_\C \)
or equivalently in \( [(\lie{k}_\sim)_\C ,(\lie{k}_\sim)_\C] \), which is
itself the quotient of 
\( [(K_\sim)_\C,(K_\sim)_\C]/[P,P] \cap R_{[\sim,\mathcal{O}]} \) by
\( (T^*_{[\sim,\mathcal{O}]})_\C \).
This nilpotent orbit for a product of special linear groups is 
an open subset of a hyperk\"ahler quotient of a flat hyperk\"ahler
space of quivers (cf. \cite{KS}), and \( K_\C/[P,P] \cap  R_{[\sim,\mathcal{O}]} \)
itself can likewise be described inductively in terms of the hyperk\"ahler
implosion of the corresponding product of special unitary groups.
\end{remark}

\begin{remark}  \label{remloctriv}
The hyperk\"ahler moment map \( \mathbb{H} \to \R^3 \) for the standard \( S^1 \)-action on \( \mathbb{H} \) restricts to a locally trivial fibration
\begin{equation*}
\mathbb{H} \setminus \{ 0 \} \to \R^3  \setminus \{ 0 \}
\end{equation*}
with fibre \( S^1 \). 
Recall the definition of the hypertoric variety \( M_T \) from \S3.
We can, as in \cite{BD},
 stratify \( M_T = \mathbb{H}^{n(n-1)/2} \) using the 
quaternionic coordinate hyperplanes, each stratum corresponding to fixing the
 subset \( E \) of \( \{1,\ldots,n(n-1)/2\} \) indexing the quaternionic hyperplanes
containing the points of the stratum.
 Then the hyperk\"ahler moment map \( M_T \to (\R^3)^{n(n-1)/2} \) restricted to
a stratum is a locally trivial fibration with fibre 
\begin{equation*}
T_{\tilde{H}}/(S^1)^{|E|}
\end{equation*}
where \( (S^1)^{|E|} \) is the subtorus of \( T_{\tilde{H}} = (S^1)^{n(n-1)/2} \)
whose Lie algebra is generated by the basis vectors indexed by the elements of \( E \).

Similarly
it follows from Definition~\ref{stratumhypertoric} and Remark~\ref{rem7.5a} that
that
\( \mu_{(S^1)^{n-1}}:  Q_{[\sim,\mathcal{O}],T} \to (\R^3)^{n-1}_\sim = 
(\lie{t}\otimes \R^3)_\sim \) is a locally trivial
fibration with fibre the quotient
\begin{equation*}
T/[P,P] \cap T_{[\sim,\mathcal{O}]}
\end{equation*}
of \( T = T_{\tilde{H}}/T_H \).
\end{remark}

\section{The refined strata}

Recall from (\ref{newstrat}) that the universal hyperk\"ahler implosion 
\( Q = M \hkq H \) for \( K=\SU(n) \) is a disjoint union
 \begin{equation*}
    Q = \coprod_{S,\delta, \sim} Q_{(S,\delta, \sim)} = \coprod_{\sim,\mathcal{O}} Q_{[\sim,\mathcal{O}]} 
  \end{equation*}
of subsets indexed by \( (S,\delta,\sim) \) or equivalently, as discussed
immediately before Definition~\ref{defnsimnil},
by pairs \( (\sim,\mathcal{O}) \) where
\( \sim \) is an equivalence relation on \( \{1,\ldots,n\} \) and \( \mathcal{O} \)
is a nilpotent adjoint orbit in 
 \( (\lie{k}_\sim)_\C \).
 Here 
\begin{equation*}
Q_{[\sim,\mathcal{O}]} = Q_{(S,\delta,\sim)} = Q_{(S,\delta)} \cap \mu_{(S^1)^{n-1}}^{-1}((\R^3)^{n-1}_{\sim})
\end{equation*}
as in Definition \ref{QSdeltasim}.
We have
\begin{equation*}
Q_{[\sim,\mathcal{O}]} = \SU(2) Q_{[\sim,\mathcal{O}]}^\circ
\end{equation*}
where \( Q_{[\sim,\mathcal{O}]}^\circ \) is the open subset of \( Q_{[\sim,\mathcal{O}]} \)
which is its intersection with
\begin{equation*}
\mu_{(S^1)^{n-1}}^{-1}( \{ (\lambda_1, \ldots, \lambda_{n-1}) \in (\R^3)^{n-1}_\sim: 
\sum_{k=i}^{j-1} \lambda_k = 0
  \ \text{in}\ \R^3
\end{equation*}
\begin{equation*}
\iff \sum_{k=i}^{j-1} \lambda_k^\C = 0
  \ \text{in}\ \C
\}
).
\end{equation*}
Recall also from Corollary~\ref{corref} and Remark~\ref{remref} that
\begin{equation*}
Q_{[\sim,\mathcal{O}]}^\circ =  K_\C \times_{R_{[\sim,\mathcal{O}]}  }
 Q_{[\sim,\mathcal{O}]}^{\circ,JCF}  
\cong  (K_\C/[P,P] \cap  R_{[\sim,\mathcal{O}]}) \times_{(T_{[\sim,\mathcal{O}]})_\C  }
 Q_{[\sim,\mathcal{O}],T}^\circ
\end{equation*}
where  \(  Q_{[\sim,\mathcal{O}],T}^\circ  \) is an open subset of a hypertoric variety, and \( R_{[\sim,\mathcal{O}]} \) is the centraliser in \( (K_\sim)_\C \) of
the standard representative \( \xi_0 \) in Jordan canonical form of the nilpotent
orbit \( \mathcal{O} \) in \( (\lie{k}_\sim)_\C \) (cf. Remark~\ref{remsigma2}), while
\( (T_{[\sim,\mathcal{O}]})_\C = T_\C \cap R_{[\sim,\mathcal{O}]}   \). Moreover
\( [P,P] \cap  R_{[\sim,\mathcal{O}]} \) acts trivially on 
\( Q_{[\sim,\mathcal{O}]}^{\circ,JCF}  \) and 
\( (T_{[\sim,\mathcal{O}]})_\C /[P,P] \cap (T_{[\sim,\mathcal{O}]})_\C  \)
acts freely on 
\( Q_{[\sim,\mathcal{O}]}^{\circ,JCF}  \) by Remark \ref{rem7.5a}.
In addition
 \( K_\C/[P,P] \cap  R_{[\sim,\mathcal{O}]} \)
can be described inductively in terms of the hyperk\"ahler
implosions of the special unitary groups whose product is
\(  [(K_\sim)_\C,(K_\sim)_\C] \).
Recall finally from  Lemma~\ref{leminj} and Remark~\ref{remsigma2} that 
the non-empty fibres of the restriction
\begin{equation*}
Q_{[\sim,\mathcal{O}]}^{\circ,JCF} \to \lie{k}_\C \oplus \lie{t}_\C
\end{equation*}
to \( Q_{[\sim,\mathcal{O}]}^{\circ,JCF}  \) of the complex moment map
for the action of \( K \times T \) on \( Q \) are single \( (T_{[\sim,\mathcal{O}]})_\C \times T_\C \)-orbits,  and its image is 
\begin{equation*}
\Delta (\lie{t}_\C)_\sim \oplus \xi_0 = \{ (\zeta + \xi_0, \zeta) \in \lie{k}_\C \oplus \lie{t}_\C : \zeta \in (\lie{t}_\C)_\sim \}.
\end{equation*}
Putting this all together we obtain the following theorem. 

\begin{theorem} \label{thm6.8}
For each 
equivalence relation \( \sim \) on \( \{1,\ldots,n\} \) and nilpotent adjoint orbit \( \mathcal{O} \)
for \( (K_\sim)_\C \), 
 the stratum \(  Q_{[\sim,\mathcal{O}]} \) 
is the union over \( s \in \SU(2) \) of its open subsets 
 \( sQ_{[\sim,\mathcal{O}]}^\circ \), and 
\begin{equation*}
Q_{[\sim,\mathcal{O}]}^\circ =  K_\C \times_{R_{[\sim,\mathcal{O}]}  }
 Q_{[\sim,\mathcal{O}]}^{\circ,JCF}
\end{equation*}
where \( R_{[\sim,\mathcal{O}]} \) is the centraliser in \( (K_\sim)_\C \) of
the standard representative \( \xi_0 \) in Jordan canonical form of the nilpotent
orbit \( \mathcal{O} \) in \( (\lie{k}_\sim)_\C \), and  \(  Q_{[\sim,\mathcal{O}]}^{\circ,JCF}  \) can be identified with an open subset of a hypertoric variety. The image of  
 the restriction
\begin{equation*}
Q_{[\sim,\mathcal{O}]}^{\circ} \to \lie{k}_\C
\end{equation*}
 of the complex moment map for the action of \( K \) on \( Q \) is 
\( K_\C( (\lie{t}_\C)_\sim \oplus \mathcal{O}) \cong K_\C \times_{(K_\sim)_\C}
( (\lie{t}_\C)_\sim \oplus \mathcal{O})
 \) 
and its fibres are single \( (T_{[\sim,\mathcal{O}]})_\C \times T_\C \)-orbits, where
\( (T_{[\sim,\mathcal{O}]})_\C = T_\C \cap R_{[\sim,\mathcal{O}]}   \) and
\( (T_{[\sim,\mathcal{O}]})_\C /[P,P] \cap (T_{[\sim,\mathcal{O}]})_\C  \)
acts freely on 
\( Q_{[\sim,\mathcal{O}]}^{\circ,JCF}  \).
\end{theorem}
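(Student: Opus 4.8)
The plan is to read the theorem as a synthesis of the results accumulated in the preceding two sections and to assemble them in the order in which the geometry is built up, taking care only at the points where a statement proved on the Jordan standard-form locus \( Q_{[\sim,\mathcal{O}]}^{\circ,JCF} \) must be promoted to the full open stratum \( Q_{[\sim,\mathcal{O}]}^\circ \) and then to \( Q_{[\sim,\mathcal{O}]} \).

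First I would dispose of the \( \SU(2) \)-sweep. By definition \( Q_{[\sim,\mathcal{O}]}^\circ = Q_{(S,\delta,\sim)}^\circ \) is the locus on which the vanishing \( \sum_{k=i}^{j-1}\lambda_k = 0 \) in \( \R^3 \) is detected already by its complex part; applying a generic element of \( \SU(2) \) to rotate the complex structures, and hence the splitting \( \R^3 = \C \oplus \R \), one can always arrange \eqref{reg}, which is precisely the computation recorded before Definition~\ref{defnsimnil} and gives \( Q_{[\sim,\mathcal{O}]} = \SU(2) Q_{[\sim,\mathcal{O}]}^\circ \) with each \( s Q_{[\sim,\mathcal{O}]}^\circ \) open. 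The central identification \( Q_{[\sim,\mathcal{O}]}^\circ \cong K_\C \times_{R_{[\sim,\mathcal{O}]}} Q_{[\sim,\mathcal{O}]}^{\circ,JCF} \) is then exactly \eqref{identity} from Remark~\ref{remchoice}, which itself rests on Proposition~\ref{corBQ} and Remark~\ref{remJCF}: the generalised-eigenspace decomposition of a quiver is canonical, so the subgroup of \( K_\C \times T_\C \) preserving the standard form is forced into \( R_{[\sim,\mathcal{O}]} \times T_\C \), and the \( T_\C \)-factor collapses since it already acts within the fibre. To see that \( Q_{[\sim,\mathcal{O}]}^{\circ,JCF} \) is an open subset of a hypertoric variety I would invoke the bijection \( \psi \) onto \( Q_{[\sim,\mathcal{O}],T}^\circ \) together with Definition~\ref{stratumhypertoric}, in which \( Q_{[\sim,\mathcal{O}],T}^\circ \) is presented as an open subset of a hyperk\"ahler modification of \( M_{[\sim,\mathcal{O}],T}\hkq T_H \).

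For the moment-map assertions I would work on \( Q_{[\sim,\mathcal{O}]}^{\circ,JCF} \) first and transport along the bundle. Remark~\ref{remsigma2} computes the \( K \times T \) image on the standard-form locus as \( \Delta (\lie{t}_\C)_\sim \oplus \xi_0 \); sweeping by \( K_\C \), which acts trivially on the \( T \)-component and by conjugation on the \( K \)-component, gives the image \( K_\C((\lie{t}_\C)_\sim \oplus \mathcal{O}) \), using that \( (K_\sim)_\C \) centralises \( (\lie{t}_\C)_\sim \) pointwise so that conjugating \( \zeta + \xi_0 \) by \( (K_\sim)_\C \) fixes \( \zeta \) while sweeping \( \xi_0 \) through all of \( \mathcal{O} = (K_\sim)_\C \xi_0 \), and hence exhibiting the associated-bundle form \( K_\C \times_{(K_\sim)_\C}((\lie{t}_\C)_\sim \oplus \mathcal{O}) \). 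The fibre statement is Lemma~\ref{leminj} lifted to \( Q_{[\sim,\mathcal{O}]}^\circ \): fixing the moment-map value for \( K \) together with \( T \) fixes the \( K_\C \)-invariant \( T \)-value \( \zeta \) and hence the semisimple part, so any element relating two points of a fibre must centralise both \( \zeta \) and \( \xi_0 \) and therefore lies in \( R_{[\sim,\mathcal{O}]} \); by \eqref{identity} this collapses the fibre to a single \( R_{[\sim,\mathcal{O}]} \times T_\C \)-orbit, which by Lemma~\ref{leminj} is a single \( (T_{[\sim,\mathcal{O}]})_\C \times T_\C \)-orbit. Freeness of the residual action of \( (T_{[\sim,\mathcal{O}]})_\C/([P,P]\cap (T_{[\sim,\mathcal{O}]})_\C) \) is Remark~\ref{rem7.5a}.

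The step I expect to be the main obstacle is the clean bookkeeping of the two tori: keeping straight exactly which subgroup of \( K_\C \times T_\C \) preserves the Jordan standard form, and checking that it is the value of the complex moment map for \( K \times T \) together, rather than for \( K \) alone (whose fibres would also contain Weyl-translated sheets), that pins down the semisimple datum \( \zeta \) and forces the relating element into \( R_{[\sim,\mathcal{O}]} \). Once this is in hand the remaining assertions follow by concatenating the bundle identification \eqref{identity}, the hypertoric bijection \( \psi \), and the image computation of Remark~\ref{remsigma2}.
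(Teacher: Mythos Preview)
Your proposal is correct and follows essentially the same approach as the paper: the theorem is stated there as a direct synthesis of the preceding results (the \( \SU(2) \)-sweep before Definition~\ref{defnsimnil}, the bundle identification \eqref{identity} from Remark~\ref{remchoice}, the hypertoric bijection \( \psi \), Remark~\ref{remsigma2}, Lemma~\ref{leminj}, and Remark~\ref{rem7.5a}), introduced simply by ``Putting this all together we obtain the following theorem.'' Your additional care about lifting the fibre statement from the \( K\times T \) moment map on \( Q_{[\sim,\mathcal{O}]}^{\circ,JCF} \) to the \( K \) moment map on \( Q_{[\sim,\mathcal{O}]}^\circ \), using that on this stratum the \( T \)-value equals the semisimple part of the \( K \)-value (Remark~\ref{remsigma2}), is exactly the bookkeeping the paper leaves implicit.
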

\begin{remark}
Recall  that the symplectic implosion \( X_{{\rm impl}} \) of  a symplectic manifold \( X \) with a Hamiltonian action of a compact group \( K \) with moment
map \( \mu_X \) is the 
disjoint union over the faces \( \sigma \) of a positive Weyl chamber \( \lie{t}_+ \)
of the strata
\begin{equation*}
\mu_X^{-1}(\sigma)/[K_\sigma,K_\sigma]
\end{equation*}
where \( K_\sigma \) is the stabiliser in \( K \) of any element of the face \( \sigma \)
of \( \lie{t}_+ \) and \( [K_\sigma,K_\sigma] \) is its commutator subgroup.
Similarly if \( X \) is a hyperk\"ahler manifold with a hyperk\"ahler action
of \( K=\SU(n) \) and hyperk\"ahler moment map \( \mu_X:X \to \lie{k} \otimes \R^3 \)
and complex moment map its projection \( \mu_{X,\C} \) to \( \lie{k}_\C \), then the hyperk\"ahler implosion \( X_{{\rm hkimpl}} \) of \( X \) is defined to be the hyperk\"ahler
quotient of \( X \times Q \) by the diagonal action of \( K \). In the light of 
Theorem \ref{thm6.8} and Remark~\ref{rem7.5a} we expect to have a description of
\( X_{{\rm hkimpl}} \) as follows, at least when \( X \) is an affine variety with
respect to all its complex structures so that symplectic quotients can be identified with GIT quotients.
 \( X_{{\rm hkimpl}} \) should be the disjoint union 
over all equivalence relations \( \sim \) on \( \{1,\ldots,n\} \) and nilpotent adjoint
orbits in \( (\lie{k}_\sim)_\C \)
of its
subsets
\( X_{{\rm hkimpl} [\sim,\mathcal{O}] } \), 
which are themselves the unions of open subsets
\( sX_{{\rm hkimpl} [\sim,\mathcal{O}] }^\circ  \) for \( s \in \SU(2) \), 
 such that
\begin{equation*}
X_{{\rm hkimpl} [\sim,\mathcal{O}] }^\circ  = 
\mu_{X,\C}^{-1}((\lie{t}_\C)_\sim \oplus \xi_0)/
[P,P] \cap  R_{[\sim,\mathcal{O}]}
\end{equation*}
where \(  \xi_0 \in \mathcal{O} \) is the canonical representative of the nilpotent orbit
\( \mathcal{O} \subseteq (\lie{k}_\sim)_\C \) in Jordan canonical form as in 
Remark~\ref{rem7.5a}, and 
\( (\lie{t}_\C)_\sim \) is the set of elements of \( \lie{t}_\C \)
with centraliser \( K_\sim \) in \( K \),
while \( P \) is the Jacobson-Morozov parabolic of
\( \xi_0 \) in \( (K_\sim)_\C) \) and \( R_{[\sim,\mathcal{O}]} \) is the centraliser  of
\( \xi_0 \) in \( (K_\sim)_\C  \). 

\end{remark}

\section{An approach via Nahm's equations}

\bigskip 
The results so far have been proved for the case \( K=SU(n) \), where 
from \cite{DKS} we
have a finite-dimensional description of the universal hyperk\"ahler implosion
in terms of quivers. However in the current paper we have tried
to formulate many of our results in a way that could potentially
admit generalisation to other compact groups.
In this final section we sketch a gauge-theoretic approach,
involving Nahm's equations, which could provide another means of
attacking this problem.
We recall that the Nahm equations 
are the system
\begin{equation} \label{Nahm}
  \frac{dT_i}{dt} + [T_0 , T_i] = [T_j, T_k], \qquad \text{\( (ijk) \)
  cyclic permutation of \( (123) \),}
\end{equation}
where \( T_i \) take
values in \( \kf \) and are smooth on some specified interval \( \mathcal{I} \). Moduli
spaces of solutions to the Nahm equations are
 obtained by quotienting by the gauge action
\begin{equation} \label{gauge}
  T_0 \mapsto g T_0 g^{-1} - \dot{g} g^{-1},\qquad 
  T_i \mapsto g T_i g^{-1} \ (i=1,2,3),
\end{equation}
where \( g \colon \mathcal{I} \mapsto K \), subject to appropriate
constraints on \( g \). The Nahm equations
may be interpreted as the vanishing condition for a
hyperk\"ahler moment map for the action (\ref{gauge}) of the group of gauge
transformations on an infinite-dimensional flat quaternionic space of
\( \kf \)-valued functions on the interval \( \mathcal I \). In this way 
Nahm moduli spaces can acquire a hyperk\"ahler structure. 
In particular Kronheimer \cite{Kronheimer:semi-simple}, Biquard \cite{Biquard}
and Kovalev \cite{Kovalev} have shown that coadjoint orbits
of \( K_{\C} \) may be given hyperk\"ahler structures as moduli spaces of Nahm data on the half-line \( [0,\infty) \), while Kronheimer \cite{Kronheimer:cotangent} has shown that 
the cotangent bundle 
of \( K_{\C} \) may be given a hyperk\"ahler structure as a moduli space of Nahm data on 
the interval \( [0,1] \).
\medskip
Let us fix a Cartan algebra \( \tf \) of the Lie algebra \( \kf \) of the compact group
\( K \).  We can consider quadruples \( (T_0,T_1, T_2,T_3) \), where each \( T_i \) takes
values in \( \kf \), satisfying the Nahm equations and defined on the half line
\( [0, \infty) \). We recall from \cite{Biquard} that such a solution has asymptotics
\begin{equation*}
  T_i = \tau_i + \frac{\sigma_i}{t} + \dotsb \qquad (i=1,2,3),
\end{equation*}
where \( \tau=(\tau_1, \tau_2, \tau_3) \) is a commuting triple, which we shall take
to lie in the fixed Cartan algebra \( \tf \). Also \( \sigma_i = \rho(e_i) \) where
\( e_1,e_2,e_3 \) is a standard basis for \( \su(2) \) and \( \rho \colon \su(2)
\rightarrow \kf \) is a Lie algebra homomorphism, so \( [\sigma_1, \sigma_2] =
\sigma_3 \) etc.  Moreover we must have \( [\tau_i, \sigma_j]=0 \) for
\( i,j=1,2,3 \); equivalently, \( \rho \) takes values in the Lie algebra
\( \cf \) of the common centraliser \( C(\tau_1,\tau_2,\tau_3) \) of the triple \( (\tau_1,\tau_2,\tau_3) \).

We factor out by gauge transformations equal to the identity at \( 0,\infty \). In addition,
we have an action of \( T \) by gauge transformations that are the identity at
\( t=0 \) and take values in \( T \) at \( t=\infty \).
If the common centraliser \( C(\tau_1,\tau_2, \tau_3) \) of the triple
\( (\tau_1, \tau_2, \tau_3) \) is just the maximal torus \( T \), then in
fact \( \rho \) and hence the \( \sigma_i \) are zero. The resulting Nahm
moduli space, where we quotient by \( T \), is exactly Kronheimer's
description of a semisimple orbit as a moduli space
\cite{Kronheimer:semi-simple}.

If the centraliser of the 
triple is larger, we must consider Nahm data asymptotic to the
triple, but with various possible \( \frac{\sigma_i}{t} \) terms.
 The various coadjoint orbits are obtained by choosing
\( \sigma_i \) and factoring out by gauge transformations that are \( I \) at \( t=0 \)
and lie in \( C(\tau_1, \tau_2, \tau_3) \cap
C(\sigma_1, \sigma_2, \sigma_3) \) at infinity. The coadjoint
orbits will fit
together to form the Kostant variety corresponding to our choice of \( \tau \).
(We recall that the Kostant varieties are the varieties obtained by
fixing the values of a generating set of invariant polynomials
\cite{Kostant:polynomial}).
The semisimple stratum will correspond to \( \sigma_i=0 \).

\medskip
The universal hyperk\"ahler implosion for an arbitrary compact group \( K \) 
is expected to be a space, which, as in the finite-dimensional
\( SU(n) \) quiver picture of the preceding section, admits a
hyperk\"ahler torus action, with hyperk\"ahler reductions giving the
Kostant varieties.

In terms of Nahm data, this should mean that to obtain
the  universal hyperk\"ahler implosion we do not fix triples \( \tau \)
but allow them to vary in a fixed Cartan algebra, and that we should factor
out only gauge transformations asymptotic to the identity as \( t \) tends to \( \infty \), so the above \( T \) action remains.
The moment map for this should formally be evaluation of \( (T_1, T_2, T_3) \) at
\( \infty \); that is, it should give the triple \( (\tau_1, \tau_2, \tau_3) \).

Then the hyperk\"ahler quotient by \( T \) would be the space of Nahm data on
\( [0, \infty) \) asymptotic to a \emph{fixed} triple \( (\tau_1, \tau_2,
\tau_3) \), modulo gauge transformations equal to the identity at \( t=0 \) and
\( T \)-valued at infinity. As mentioned above, if the triple  has common
centraliser \( T \), 
this exactly gives the corresponding Kostant variety, which in this case is just the
regular semisimple orbit.

For a general commuting triple \( \tau = (\tau_1, \tau_2, \tau_3) \), the Kostant variety is stratified by different
orbits, which as above are obtained by fixing \( \tau \)
and \( \rho \) (or equivalently \( \sigma \)) and factoring by gauge transformations that take values
in the common centraliser \( C(\tau, \sigma) \) of \( \tau \) and \( \sigma \) at infinity.
To obtain this Kostant variety via hyperk\"ahler reduction by \( T \), we need
therefore to perform further collapsings on the moduli space.  More
precisely, we should collapse by factoring out gauge transformations that are the
identity at \( t=0 \) but take values at \( t= \infty \) in the commutator
\( [C,C] \), where \( C=C(\tau_1, \tau_2,
\tau_3) \).  So on the open
dense set of the moduli space where the triple \( (\tau_1, \tau_2,
\tau_3) \) has centraliser \( T \), no collapsing occurs. In general, reducing
by \( T \) will lead to quotienting by \( C = C(\tau_1, \tau_2,
\tau_3) \).
The action of \( C \) can be used to bring the \( \sigma_i \) into one of
a finite list of standard forms (one for each stratum of the Kostant variety
corresponding to the choice of \( \tau \)),
and then the remaining freedom lies in \( C(\tau, \sigma) \), which is what we
need to factor out by to get the coadjoint orbit.

We denote the space obtained via this collapsing by \( {\mathcal Q} \),
and this is a candidate for the universal hyperk\"ahler implosion
for the general compact group \( K \). We can
stratify \( \mathcal Q \) by the centraliser \( C(\tau_1, \tau_2, \tau_3) \) of the triple \( (\tau_1, \tau_2,
\tau_3) \). That is, for each compact subgroup \( C \) of \( K \), we consider
\( {\mathcal Q}^{C} \), the space of Nahm triples with \( C(\tau_1,
\tau_2, \tau_3) = C \), modulo gauge transformations which are the identity
at \( t=0 \) and take values in \( [C,C] \) at \( t= \infty \).  The top stratum is
then the open dense set where the centraliser of the 
triple is the maximal torus.
This stratification agrees in the case \( K=\SU(n) \) with the stratification by 
\( K_{\sim} \) in the quiver picture \( Q=M \hkq H \), as in Remark \ref{remksim}; if we stratify 
further using \( \sigma \) we can obtain strata corresponding to the subsets \( Q_{[\sim,\mathcal{O}]} \) of \( Q \).
Notice that the regularity condition (\ref{reg})
(which may always be achieved
by a generic \( SU(2) \) rotation) is the condition of \emph{Biquard regularity}
\cite{Biquard}
for the triple \( (\tau_1, \tau_2, \tau_3) \), that is \( C(\tau_1, \tau_2, \tau_3)
= C(\tau_2, \tau_3) \).

\begin{remark}
 This is analogous to the symplectic case, where we obtain the implosion
  by taking \( K \times \tf_{+}^* \) and collapsing by commutators of points in
  the Weyl chamber \( \tf_{+}^* \), so that in the interior of the chamber no
  collapsing takes place.
\end{remark}

In the symplectic case each stratum can be viewed as a symplectic quotient
of a suitable space by the commutator, and hence is itself symplectic.
There is an analogous statement in the hyperk\"ahler setting.
For we have a decomposition
\begin{equation*}
  \cf^* = \z(\cf)^* \oplus [\cf^*, \cf^*]
\end{equation*}
where \( \cf \) is the Lie algebra of the common centraliser \( C \) of the triple \( (\tau_1,\tau_2,\tau_3) \).
Now consider the space of Nahm solutions with \( T_i(\infty) \in \cf^*
(\mbox{for }i=0,1,2,3) \), modulo gauge transformations equal to the identity at \( 0,
\infty \).
There is a \( C \) action on this space by gauge transformations equal to the
identity at \( t=0 \) and lying in \( C \) at \( t=\infty \).  The formal 
moment map for this
action is evaluation at \( \infty \). So the moment map for the \( [C,C] \) action
is evaluation at \( \infty \) followed by projection onto \( [\cf^*, \cf^*] \).
So the formal hyperk\"ahler quotient by \( [C,C] \) at level zero is
the set of Nahm matrices with \( T_{i}(\infty) \in \z(\cf^*) : i=1,2,3 \),
modulo the action of \( [C,C] \). The stratum \( {\mathcal Q}^{C} \),
that is, the quotient by \( [C,C] \) of
the set of Nahm matrices with common centraliser of \( T_1(\infty),
T_2(\infty), T_3(\infty) \) equal to \( C \), is then an open
dense subset of this quotient.

\medskip It should also be possible to construct a hypertoric
variety
by considering the sweep under the \( T \) action of the set of constant solutions
\( (0, c_1, c_2, c_3) \; : \; c_i \in \lie{t} \) to the Nahm equations. On the
open subset where no
collapsing takes place the \( T \) action is free.

\bigskip
The above statements are formal --- in order
to carry out the programme mentioned at the beginning
 of this section
we need to provide an analytical
framework. In particular we need to work out a suitable
  stratified hyperk\"ahler metric. We conclude by discussing some of the
issues in defining such a metric.

\medskip
As above,
we fix a Cartan algebra \( \tf \) of the Lie algebra \( \kf \) of the compact group
\( K \).  We consider quadruples \( (T_0,T_1, T_2,T_3) \), where each \( T_i \) takes
values in \( \kf \), satisfying the Nahm equations and defined on the half line
\( [0, \infty) \). 
We form a moduli space \( \tilde{\mathcal M} \) from the above data by
quotienting out by gauge transformations that are the identity at \( t=0,
\infty \).

The standard \( L^2 \) metric on Nahm moduli spaces over the interval \( [0,
\infty) \) is given by
\begin{equation*}
  \parallel (X_0,X_1, X_2, X_3) \parallel^2 = \int_{0}^{\infty} 
  \sum_{i=0}^{3} \langle X_i , X_i \rangle \, dt
\end{equation*}
for tangent vectors \( X=(X_0,\dots, X_3) \), where \( \langle, \rangle \) denotes
the Killing form on \( \kf \).

Write our tangent vector as
\begin{equation*}
  X_i = \delta_i + \frac{\epsilon_i}{t} +  \ldots \qquad (i=1,2,3),
\end{equation*}
so
\begin{equation*}
  \langle X_i, X_i \rangle = \langle \delta_i, \delta_i \rangle
  + 2 \frac{\langle \delta_i , \epsilon_i \rangle}{t} + O(\frac{1}{t^r}) 
\qquad(r>1)
\end{equation*}
The \( L^2 \) metric will not be finite except in tangent directions where
\( \delta_i=0 \).  Such directions correspond to those tangent to the Nahm
matrices with a fixed commuting triple \( (\tau_1, \tau_2, \tau_3) \).

We may, however, modify our metric following Bielawski \cite{Bielawski:hyper-kaehler}
thus:
\begin{equation*}
  \begin{split}
   \lvert (X_0,X_1, X_2, X_3) \rvert^2 &= \int_{0}^{\infty} \sum_{i=0}^{3}
   (\langle X_i , X_i \rangle -\langle X_i(\infty), X_i(\infty) \rangle \,
   ) dt\\
    &\quad+ c \langle X_i(\infty), X_i(\infty) \rangle,
  \end{split}
\end{equation*}
where \( c \) is a constant.
This defines a symmetric bilinear form, though definiteness and nondegeneracy
properties remain unclear.  
Now \\ \( X_i(\infty) = \delta_i \), so we see that the
Bielawski pseudometric 
is finite in directions such that \( \langle \delta_i,
\epsilon_i \rangle =0 \) for all \( i \).  So it is finite even in certain
directions that correspond to infinitesimally changing the \( \tau_i \).

In particular, it is finite on the open dense set of \( \tilde{M} \) consisting
of all Nahm matrices where the triple \( (\tau_1, \tau_2, \tau_3) \) is
regular, since, as remarked above, the \( \epsilon_i \) terms will be zero for
directions tangent to this region.

To analyse which directions are finite in general we need to use the
relation \( [ \tau_i, \sigma_j]=0 \), \( (i,j,=1,2,3) \). Differentiated, this
gives the relation
\begin{equation}
  \label{eq:commrelation}
  [\delta_i, \sigma_j] + [\tau_i, \epsilon_j]=0 \qquad (i,j=1,2,3)
\end{equation}
on tangent vectors.

Suppose we stratify Nahm data by the centraliser \( C \) of
\( (\tau_1,\tau_2,\tau_3) \). 
Let us consider a tangent vector to a
stratum.

For all elements in the stratum, and for all
\( h \in \cf = {\rm Lie} \;(C) \), we have \( [\tau_i, h]=0 \).
 It follows that for our tangent vector,
\begin{equation}
  \label{eq:deltacomm}
  [\delta_i, h]=0 \qquad\forall h \in \cf 
\end{equation}
In particular,
\begin{equation*} [\delta_i, \sigma_j]=0 \qquad (i,j=1,2,3).
\end{equation*}
Now our above relation \eqref{eq:commrelation} shows
\begin{equation*} [\tau_i, \epsilon_j]=0 \qquad (i,j=1,2,3)
\end{equation*}
so \( \epsilon_j \in \cf \).
If \( \epsilon_i \) is a commutator \( [\xi, \eta] \), where \( \xi, \eta \in \cf \),
we have:
\begin{align*}
  \tr \delta_i \epsilon_i
  &= \tr \delta_i \xi \eta - \tr \delta_i \eta \xi \\
  &= \tr \delta_i \xi \eta - \tr \xi \delta_i \eta \\
  &= 0
\end{align*}
where in the last step we have used the above observation that \( [\xi,
\delta_i]=0 \) since \( \xi \in \cf \).
By linearity, this also holds if \( \epsilon_i \) is a sum of commutators in
\( \cf \).

But recall that \( [\sigma_1, \sigma_2]=\sigma_3 \) etc. So, passing to tangent
vectors,
\begin{equation*} [\sigma_1, \epsilon_2] + [\epsilon_1, \sigma_2] =
 \epsilon_3
\end{equation*}
and cyclically. Each \( \epsilon_i \) is indeed therefore a sum of commutators
in \( \cf \), and hence we see \( \langle \delta_i,\epsilon_i \rangle =0 \).  We
deduce

\begin{theorem}
  The Bielawski pseudometric is finite in directions tangent to the set of Nahm
  matrices with fixed centraliser of \( (\tau_1, \tau_2, \tau_3) \).
\end{theorem}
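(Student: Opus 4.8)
The plan is to reduce the finiteness assertion to a single pointwise-at-infinity vanishing condition, and then to extract that condition purely from the relations defining the stratum. Writing a tangent vector in its asymptotic form $X_i = \delta_i + \epsilon_i/t + \dotsb$, one has $\langle X_i, X_i\rangle - \langle X_i(\infty), X_i(\infty)\rangle = 2\langle \delta_i, \epsilon_i\rangle/t + O(t^{-r})$ with $r>1$, so the integral in the Bielawski pseudometric converges exactly when the residue $\langle \delta_i, \epsilon_i\rangle$ vanishes for each $i=1,2,3$. Thus the whole theorem comes down to proving $\langle \delta_i, \epsilon_i\rangle = 0$ in directions tangent to the stratum on which the centraliser $C(\tau_1,\tau_2,\tau_3)=C$ is fixed.

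To obtain this I would differentiate the constancy relations along the tangent direction. On the stratum every $\tau_i$ commutes with all of $\cf = \LIE(C)$, so differentiating $[\tau_i, h]=0$ for $h \in \cf$ gives $[\delta_i, h]=0$; that is, $\delta_i$ centralises $\cf$. Differentiating the compatibility relation $[\tau_i,\sigma_j]=0$ then yields $[\delta_i,\sigma_j]+[\tau_i,\epsilon_j]=0$, and since $\sigma_j\in\cf$ the first bracket already vanishes, forcing $[\tau_i,\epsilon_j]=0$ and hence $\epsilon_j\in\cf$.

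The crux is to write each $\epsilon_i$ as a sum of commutators of elements of $\cf$. Here I would differentiate the homomorphism relations $[\sigma_1,\sigma_2]=\sigma_3$ and its cyclic permutations, obtaining $[\sigma_1,\epsilon_2]+[\epsilon_1,\sigma_2]=\epsilon_3$ cyclically; since all the $\sigma_i$ and $\epsilon_i$ lie in $\cf$, each $\epsilon_i$ is visibly a sum of commutators within $\cf$. For any $\xi,\eta\in\cf$ one has $\tr(\delta_i[\xi,\eta]) = \tr([\delta_i,\xi]\eta)=0$, using cyclicity of the trace together with the fact that $\delta_i$ centralises $\cf$; by linearity $\langle\delta_i,\epsilon_i\rangle=0$, which finishes the argument.

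I expect the real obstacle to be analytic rather than algebraic. The argument above is formal: it controls only the leading coefficient $\delta_i$ and the subleading coefficient $\epsilon_i$, and it tacitly assumes both that the asymptotic expansion is valid in a sense strong enough that the $O(t^{-r})$ tail is genuinely integrable, and that the tangent space to the stratum is faithfully described by these expansions. Turning this into a rigorous statement would require the Biquard-type boundary analysis for Nahm data on the half-line together with an honest function-space model for the moduli space --- precisely the analytical framework that the authors flag as still to be supplied.
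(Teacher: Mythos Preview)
Your proposal is correct and follows essentially the same argument as the paper: reduce finiteness to the vanishing of $\langle\delta_i,\epsilon_i\rangle$, use constancy of the centraliser along the stratum to get $[\delta_i,h]=0$ for all $h\in\cf$, combine this with the differentiated relation $[\delta_i,\sigma_j]+[\tau_i,\epsilon_j]=0$ to force $\epsilon_j\in\cf$, and then differentiate the $\su(2)$-relations $[\sigma_i,\sigma_j]=\sigma_k$ to express each $\epsilon_i$ as a sum of commutators in $\cf$, whence $\langle\delta_i,\epsilon_i\rangle=0$ by invariance of the Killing form. Your closing caveat about the missing analytic framework also mirrors the paper's own disclaimer that these computations are formal.
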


\begin{remark}
 If we stratify Nahm matrices by centraliser of the triple, we therefore
 obtain a metric in the stratified sense. Note that the top stratum for
this stratification is just the set of Nahm matrices where the 
centraliser of the triple is the maximal torus.
 
\end{remark}

\begin{remark}
  In fact, the above calculation works provided \( [\delta_i, \sigma_j]=0 \)
  for all \( i,j \), as we know from above that \( \epsilon_i \) is a sum of
 commutators \( [\xi_k, \eta_k] \) with \( \xi_k = \sigma_k \). In particular if
  \( \tau_i=0 \) for all \( i \) then by \eqref{eq:commrelation} this condition
  holds, so the metric is finite on all tangent vectors, not just those
  tangent to the stratum.

  If \( K=\SU(2) \) we only have two strata, the regular one and the stratum
  where all \( \tau_i \) are zero. It follows that the metric is everywhere
  finite in this case, which checks as  the
  implosion is now flat \( \HH^2 \).
\end{remark}

\begin{remark}
  Here we have been considering the Nahm equations on the half-line \( 
  [0,\infty) \), motivated by the constructions of coadjoint orbits of
  \( K_\C \) in Kronheimer \cite{Kronheimer:semi-simple}, Biquard \cite{Biquard} and
  Kovalev \cite{Kovalev}. However we expect the universal
  hyperk\"ahler implosion for \( K \) to be the 
 complex-symplectic GIT reduction by the maximal
unipotent group \( N \) of the cotangent bundle \( T^*
  K_{\C} \) of \( K_{\C} \), and in \cite{Kronheimer:cotangent} \( T^* K_{\C} \)
  is given a hyperk\"ahler structure as a moduli space of Nahm data on
  the interval \( [0,1] \). Comparison of this construction with the
  description in the last section of a hyperk\"ahler implosion
  \( X_{{\rm hkimpl}} \) when \( K=\SU(n) \) suggests a formal picture of the
  universal hyperk\"ahler implosion for \( K \) which is similar to that
  above but uses Nahm data on the interval \( [0,1] \) instead of
  \( [0,\infty) \).
\end{remark}

\end{document}